\newtheorem{theorem}{Theorem}
\newtheorem{lemma}[theorem]{Lemma}
\newtheorem{proposition}[theorem]{Proposition}
\newtheorem{corollary}[theorem]{Corollary}
\theoremstyle{definition}
\newtheorem{definition}[theorem]{Definition}
\theoremstyle{remark}
\numberwithin{equation}{section}
\newcommand{\abs}[1]{\left\lvert#1\right\rvert}
\newcommand{\defeq}{\vcentcolon=}
\DeclarePairedDelimiterX{\inp}[2]{\langle}{\rangle}{#1, #2}
\DeclareMathOperator{\st}{{\bf st}}
\newcommand{\starint}{{\prescript{\ast}{}\int}}
\pgfplotsset{%
    compat=1.8,
    compat/show suggested version=false,
}
\begin{document}

\title[A nonstandard proof of de Finetti's Theorem]{A nonstandard proof of de Finetti's Theorem}

\author{Irfan Alam}
\address{Irfan Alam: Department of Mathematics, Louisiana State University, Baton Rouge, LA 70802, USA}
\email{irfanalamisi@gmail.com}
\urladdr{\url{http://www.math.lsu.edu/~ialam1}}
%\author[Jane Doe]{Jane Doe*}

%\thanks{* This research is supported by the Doe Foundation}
%\address{Jane Doe: Department, University, City, State with zip code, Country}
%\email{janedoe@dept.univ.edu}

\subjclass[2010] {Primary 60G09; Secondary 60C05, 28E05, 03H05,
26E35}

\keywords{Nonstandard analysis, exchangeable sequences, de Finetti’s theorem.}

\begin{abstract}
We give a nonstandard analytic proof of de Finetti's theorem for an exchangeable sequence of Bernoulli random variables. The theorem postulates that such a sequence is uniquely representable as a mixture of iid sequences of Bernoulli random variables. We use combinatorial arguments to show that this probability distribution is induced by a hyperfinite sample mean. 
\end{abstract}

\maketitle
This paper presents an approach to de Finetti’s theorem based on nonstandard analysis, the necessary concepts of which are summarized in the Appendix. Throughout this paper, we have a fixed probability space $(\Omega, \mathcal{F}, \mathbb{P})$.

\begin{definition}\label{finite_exchangeable}
A finite collection $X_1, \ldots, X_n$ of random variables is said to be \textit{exchangeable} if for any permutation $\sigma \in S_n$, the random vectors $(X_1, \ldots, X_n)$ and $(X_{\sigma(1)}, \ldots, X_{\sigma(n)})$ have the same distribution. An infinite sequence $X_1, X_2, \ldots$ of random variables is said to be \textit{exchangeable} if any finite subcollection of the $X_i$ is exchangeable in the above sense.
\end{definition}

A well-known result of de Finetti says that a sequence of exchangeable Bernoulli random variables (that is, random variable taking values in $\{0, 1\}$) is conditionally independent given the value of a random parameter in $[0,1]$ (the parameter being sampled through a unique probability measure on $[0,1]$). More precisely, we may write de Finetti's theorem in the following form. 

\begin{theorem}[de Finetti]\label{de finetti theorem}
Let $X_1, X_2, \ldots$ be a sequence of exchangeable Bernoulli random variables. There exists a unique measure $\mu$ on the interval $[0,1]$ such that the following holds:
\begin{align}\label{de Finetti equation}
    \mathbb{P}(X_1 = e_1, \ldots, X_k = e_k) = \int_{[0,1]} p^{\sum_{j = 1}^k e_j}(1 - p)^{k - \sum_{j = 1}^k e_j} d\mu(p)
\end{align}
for any $k \in \mathbb{N}$ and $e_1, \ldots, e_k \in \{0,1\}$.
\end{theorem}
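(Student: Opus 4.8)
The plan is to sidestep the usual limiting argument --- letting the number of coordinates grow to infinity --- by passing to a single \emph{hyperfinite} stage. Working in a suitably saturated nonstandard extension, fix an infinite $N \in {}^*\mathbb{N}$. Transfer of the hypothesis ``$X_1, \ldots, X_n$ is exchangeable for every $n$'' shows that the internal tuple ${}^*X_1, \ldots, {}^*X_N$ is internally exchangeable, i.e.\ distributionally invariant under every internal permutation of $\{1, \ldots, N\}$. Set $S_N \defeq \sum_{i=1}^{N} {}^*X_i$ and let $\bar X \defeq S_N/N$ be the hyperfinite sample mean; since $0 \le S_N \le N$, $\bar X$ is ${}^*[0,1]$-valued, so $p \defeq \st(\bar X)$ is a well-defined $[0,1]$-valued function on $\Omega$, measurable for the Loeb $\sigma$-algebra. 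I would then take $\mu$ to be the distribution of $p$ under the Loeb measure $L({}^*\mathbb{P})$, verify \eqref{de Finetti equation}, and deduce uniqueness from the determinacy of the moment problem on $[0,1]$.

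The combinatorial core is an elementary identity for standard $n \ge k$. With $s \defeq \sum_{j=1}^{k} e_j$, exchangeability forces
\[
    \mathbb{P}(X_1 = e_1, \ldots, X_k = e_k \mid S_n = m) = \frac{\binom{n-k}{m-s}}{\binom{n}{m}},
\]
since, conditioned on $\{S_n = m\}$, all $\binom{n}{m}$ zero--one patterns of $(X_1, \ldots, X_n)$ are equally likely and exactly $\binom{n-k}{m-s}$ of them begin with $(e_1, \ldots, e_k)$. Summing over $m$,
\[
    \mathbb{P}(X_1 = e_1, \ldots, X_k = e_k) = \sum_{m=0}^{n} \frac{\binom{n-k}{m-s}}{\binom{n}{m}}\, \mathbb{P}(S_n = m),
\]
a first-order statement that transfers verbatim with $n$ replaced by our infinite $N$ (binomial coefficients, partial sum, and $\mathbb{P}$ replaced by their internal counterparts, the sum interpreted hyperfinitely).

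The next step is the asymptotic relation
\[
    \frac{\binom{N-k}{m-s}}{\binom{N}{m}} \approx \Bigl(\frac{m}{N}\Bigr)^{s}\Bigl(1 - \frac{m}{N}\Bigr)^{k-s}
\]
holding \emph{uniformly} over internal $m$ with $0 \le m \le N$: for $s \le m \le N-k+s$ one writes each side as a product of $k$ factors from ${}^*[0,1]$ and compares them factor by factor, getting an error of size $O(k^2/N) \approx 0$, while the remaining degenerate ranges of $m$ are handled directly. Since $\sum_{m=0}^{N} {}^*\mathbb{P}(S_N = m) = 1$, plugging this uniform estimate into the transferred identity gives
\[
    \mathbb{P}(X_1 = e_1, \ldots, X_k = e_k) \approx \starint \bar X^{\,s}(1-\bar X)^{k-s}\, d{}^*\mathbb{P}.
\]
The left side is a standard real, so it equals the standard part of the right side. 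As $\bar X^{\,s}(1-\bar X)^{k-s}$ is a bounded internal function and $t \mapsto t^s(1-t)^{k-s}$ is continuous, the nonstandard integration theory recalled in the Appendix yields
\[
    \st\Bigl(\starint \bar X^{\,s}(1-\bar X)^{k-s}\, d{}^*\mathbb{P}\Bigr) = \int_{\Omega} p^{\,s}(1-p)^{k-s}\, dL({}^*\mathbb{P}) = \int_{[0,1]} t^{\,s}(1-t)^{k-s}\, d\mu(t),
\]
the last equality being the change of variables along $p$. Since $s = \sum_{j=1}^{k} e_j$, this is precisely \eqref{de Finetti equation}, establishing existence.

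For uniqueness, specializing \eqref{de Finetti equation} to $k = s$ gives $\int_{[0,1]} t^{\,s}\, d\mu(t) = \mathbb{P}(X_1 = \cdots = X_s = 1)$ for every $s$, so all moments of $\mu$ are pinned down by the law of the sequence; since a probability measure on the compact interval $[0,1]$ is determined by its moments (polynomials being dense in $C([0,1])$ by Stone--Weierstrass), $\mu$ is unique. The step I expect to be most delicate is the displayed transition to the Loeb integral: justifying the interchange of standard part and integral for the internal integrand, and confirming that $p = \st(\bar X)$ is genuinely Loeb-measurable with the claimed distribution. This is where the nonstandard measure theory of the Appendix does the real work; by contrast the combinatorial estimate, though it is the computational heart of the argument, is comparatively routine.
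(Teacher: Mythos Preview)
Your proposal is correct and follows the same overall architecture as the paper: fix an infinite $N$, form the hyperfinite sample mean, compute the conditional law given $S_N$ via exchangeability as the hypergeometric ratio $\binom{N-k}{m-s}/\binom{N}{m}$, push the internal distribution of the sample mean down to a Borel measure $\mu$ on $[0,1]$ via the Loeb construction and $\st$, and identify the right side of \eqref{de Finetti equation} as a $^*$-expectation using $S$-integrability of the bounded integrand. Uniqueness is handled identically. (One small slip: $p=\st(\bar X)$ is a function on ${}^*\Omega$, not $\Omega$, but this does not affect the argument.)

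The genuine difference is in how you handle the comparison between $a_m=\binom{N-k}{m-s}/\binom{N}{m}$ and $b_m=(m/N)^s(1-m/N)^{k-s}$. The paper controls the \emph{ratio} $a_i/b_i$ and shows it is $\approx 1$, but only on a middle range $M_1<i\le M_2$; near the endpoints the ratio is \emph{not} close to $1$, so the paper must excise the tails (Lemmas~\ref{alpha > 1} and~\ref{alpha < k-1}), treat the zero-probability case separately (Lemma~\ref{pathological case}), and invoke the hyperfinite ratio lemma (Lemma~\ref{hyperfinite lemma}) to pass from termwise to aggregate ratio. Your approach instead bounds the \emph{additive} error $|a_m-b_m|$ by $O(k^2/N)$ uniformly in $m$---which is true, since both sides are products of $k$ factors in $[0,1]$, each pair of factors differing by at most $k/(N-k)$, and the degenerate ranges $m<s$ or $m>N-k+s$ contribute at most $(k/N)$. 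Because $\sum_m {}^*\mathbb{P}(S_N=m)=1$, a single internal uniform bound of infinitesimal size suffices, with no case-splitting, no ratio lemma, and no pathological-case analysis. Your route is therefore shorter and more elementary; the paper's multiplicative formulation, on the other hand, makes the hypergeometric--binomial approximation visible as a statement about ratios of probabilities, which it explicitly interprets after the proof.
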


The integrand on the right side is the probability that $k$ iid Bernoulli($p$) random variables have the outcomes $e_1, \ldots, e_k$. In this sense, de Finetti's theorem expresses an exchangeable sequence of Bernoulli random variables as a \textit{mixture} of iid sequences of Bernoulli random variables. 

See de Finetti \cite{definetti1, definetti2} for the original formulations of this theorem. Aldous \cite{Aldous-book} and Kingman \cite{Kingman-survey} are good resources for an introduction to exchangeability and related topics. See Kirsch \cite{elementary-de-finetti} for a recent elementary proof of de Finetti's theorem.

We will give a nonstandard proof of Theorem \ref{de finetti theorem}. In nonstandard analytic language, the idea is that the measure $\mu$ will be shown to be induced by a hyperfinite sample mean $\frac{X_1 + \ldots + X_N}{N}$. A very brief introduction to nonstandard methods is provided in the appendix. We refer the reader to books such as \cite{Albeverio} and \cite{combinatorial} for more details.  

For the rest of this section, we fix an exchangeable sequence $X_1, X_2, \ldots$ of Bernoulli random variables. We also fix $k \in \mathbb{N}$ and $e_1, \ldots, e_k \in \{0, 1\}$. Taking $\alpha = \sum_{j = 1}^k e_j$ and writing the integral in \eqref{de Finetti equation} as an expectation in terms of a random variable $Y \sim \mu$, de Finetti's theorem may be restated as follows:
\begin{align}\label{de Finetti equation 2}
    \mathbb{P}(X_1 = e_1, \ldots, X_k = e_k) = \mathbb{E}_{\mu} (Y^\alpha(1 - Y)^{k - \alpha}).
\end{align}

Written this way, it is clear that any measure satisfying the conclusion of de Finetti's theorem must be unique. Indeed, taking $\alpha = k$ and varying $k$ through $\mathbb{N}$ in \eqref{de Finetti equation 2} shows that such a measure has a unique sequence of moments, which implies that they agree on expected values of continuous functions on $[0,1]$ (using the Weierstrass approximation theorem). 

Hence, it is enough to prove the existence of a probability measure on $[0,1]$ satisfying the conclusion of de Finetti's theorem. Toward that end, we will verify equation $\eqref{de Finetti equation 2}$ for a standard measure $\mu$ that is naturally induced by an appropriate Loeb measure. Fix $N > \mathbb{N}$ and define:

\begin{align}\label{definition of Y_N}
    Y_N = \frac{X_1 + \ldots + X_N}{N}.
\end{align}

Note that $Y_N$ takes values in $\left\{0, \frac{1}{N}, \ldots, \frac{N-1}{N}, \frac{N}{N} = 1\right\}$. Naively conditioning on the value of $Y_N$, we obtain the following:  

\begin{align}
 \mathbb{P}(X_1 = e_1, \ldots, &X_k = e_k) \nonumber \\
 = &\sum_{i = 0}^N {^*}\mathbb{P}\left(X_1 = e_1, \ldots, X_k = e_k \Big\vert Y_N = \frac{i}{N}\right) {^*}\mathbb{P}\left(Y_N = \frac{i}{N}\right). \label{naive conditioning} 
\end{align}

Note that we could have started the sum in \eqref{naive conditioning} at $i = \alpha$ since the conditional probabilities in this sum are zero for all $i < \alpha$. 

The random variable $Y_N$ induces an internal finitely additive internal probability measure $\mathbb{P}_N$ on ${^*}[0,1]$, which is supported on $\left\{0, \frac{1}{N}, \ldots, \frac{N-1}{N}, \frac{N}{N} = 1\right\}$, in the following way:

\begin{align}
    \mathbb{P}_N (B) = {^*}\mathbb{P}(Y_N \in B) \text{ for all ${^*}$-Borel sets } B \subseteq {^*}[0,1].
\end{align}
Consider the associated Loeb measure $L\mathbb{P}_N$. With $\mathcal{B}([0,1])$ denoting the Borel sigma algebra of $[0,1]$, define $\mu \colon \mathcal{B}([0,1]) \rightarrow [0,1]$ by:

\begin{align}\label{definition of mu}
    \mu(A) \defeq L\mathbb{P}_N(\st^{-1}(A)) \text{ for all Borel subsets } A \subseteq [0,1].
\end{align}

By Theorem \ref{appendix theorem}, $\mu$ is a well-defined Radon probability measure on $[0,1]$ such that the following holds:
\begin{align}\label{P_N and mu}
    {^*}\mathbb{E}_{\mathbb{P}_N} ({^*}f) \approx \mathbb{E}_{\mu}(f) \text{ for all bounded nonnegative } f\colon [0, 1] \rightarrow \mathbb{R}_{\geq 0}.
\end{align}

Consider the function $ f\colon [0, 1] \rightarrow \mathbb{R}_{\geq 0}$ defined by 

\begin{align}
    f(p) = p^{\alpha}(1 - p)^{k - \alpha} \text{ for all } p \in [0,1].
\end{align}

Noting the form of the right side in \eqref{de Finetti equation 2}, and using \eqref{naive conditioning} and \eqref{P_N and mu}, it is clear that we need the following to be true:

\begin{restatable}{theorem}{main}\label{conjecture}
We have
\begin{align}\label{what we need}
     &\sum_{i = 0}^N {^*}\mathbb{P}\left(X_1 = e_1, \ldots, X_k = e_k \Big\vert Y_N = \frac{i}{N}\right) {^*}\mathbb{P}\left(Y_N = \frac{i}{N}\right) \nonumber \\ 
     \approx &\sum_{i = 0}^N \left(\frac{i}{N} \right)^{\alpha} \left(1 - \frac{i}{N} \right)^{k - \alpha} {^*}\mathbb{P}\left(Y_N = \frac{i}{N}\right). 
\end{align}
\end{restatable}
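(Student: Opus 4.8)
The plan is to compute the conditional probabilities appearing in \eqref{what we need} exactly and then show that each one is infinitesimally close to $(i/N)^\alpha(1-i/N)^{k-\alpha}$ \emph{uniformly in $i$}; the estimate \eqref{what we need} then follows at once after multiplying by ${^*}\mathbb{P}(Y_N = i/N)$ and summing, since these weights sum to $1$.

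First I would pin down the conditional probability. By transfer of Definition \ref{finite_exchangeable}, the internal vector $(X_1, \ldots, X_N)$ is invariant in distribution under every permutation in ${^*}S_N$, so any two strings in $\{0,1\}^N$ with the same number of $1$'s receive the same internal probability. Since $\{Y_N = i/N\}$ is exactly the event that $(X_1,\dots,X_N)$ has $i$ ones, a straightforward count shows that
\[
{^*}\mathbb{P}\!\left(X_1 = e_1, \ldots, X_k = e_k \,\Big\vert\, Y_N = \tfrac{i}{N}\right) = \frac{\binom{N-k}{i-\alpha}}{\binom{N}{i}}
\]
for every $i$ with ${^*}\mathbb{P}(Y_N = i/N) > 0$ (the remaining, null, values of $i$ contribute nothing to \eqref{what we need} and may be ignored). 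The right-hand side is precisely the probability of drawing the sequence $e_1, \ldots, e_k$ when sampling $k$ times \emph{without replacement} from an urn with $i$ ones and $N-i$ zeros, whereas $(i/N)^\alpha(1-i/N)^{k-\alpha}$ is the corresponding \emph{with-replacement} probability; the heart of the matter is that sampling with and without replacement agree up to an infinitesimal when $k$ is (standard) finite and $N$ is infinite.

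To make this uniform, I would factor the hypergeometric probability into $k$ ratios: for $\alpha \le i \le N-(k-\alpha)$,
\[
\frac{\binom{N-k}{i-\alpha}}{\binom{N}{i}} = \prod_{m=1}^{\alpha}\frac{i-(m-1)}{N-(m-1)}\ \cdot\ \prod_{m=1}^{k-\alpha}\frac{(N-i)-(m-1)}{N-\alpha-(m-1)},
\]
while $(i/N)^\alpha(1-i/N)^{k-\alpha}$ is the product of $\alpha$ copies of $i/N$ and $k-\alpha$ copies of $(N-i)/N$. A direct computation shows that each of the $k$ matched pairs of factors differs by at most $\tfrac{2k}{N-k}$ and that all factors lie in $[0,1]$, so the telescoping identity $\prod p_m - \prod q_m = \sum_m (p_1\cdots p_{m-1})(p_m - q_m)(q_{m+1}\cdots q_k)$ bounds the difference of the two products by $\tfrac{2k^2}{N-k}\approx 0$. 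For the at most $k$ remaining indices $i \in \{0,\dots,\alpha-1\}\cup\{N-(k-\alpha)+1,\dots,N\}$ the coefficient $\binom{N-k}{i-\alpha}$ vanishes, while $(i/N)^\alpha(1-i/N)^{k-\alpha}$ is itself infinitesimal there (either $i/N\approx 0$ with $\alpha\ge 1$, or $1-i/N\approx 0$ with $k-\alpha\ge 1$). Hence there is a single infinitesimal $S$ with $\bigl|{^*}\mathbb{P}(X_1 = e_1, \ldots, X_k = e_k \mid Y_N = i/N) - (i/N)^\alpha(1-i/N)^{k-\alpha}\bigr| \le S$ for every $i$.

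Finally, subtracting the two sides of \eqref{what we need} and estimating termwise gives a difference of at most $\sum_{i=0}^{N} S\cdot{^*}\mathbb{P}(Y_N = i/N) = S \approx 0$, using $\sum_{i=0}^{N}{^*}\mathbb{P}(Y_N = i/N) = 1$ (by transfer). I expect the main obstacle to be the uniform estimate for $S$ --- especially the per-factor bound on the interior range of $i$ and the separate handling of the boundary indices --- along with the bookkeeping required to justify the closed form of the conditional probability by transferring finite exchangeability.
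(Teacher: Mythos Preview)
Your proposal is correct and takes a genuinely different, and arguably cleaner, route from the paper. The paper also derives the closed form $a_i = \binom{N-k}{i-\alpha}/\binom{N}{i}$, but then works with the \emph{ratio} $a_i/b_i$ rather than the difference: it shows $a_i/b_i \approx 1$ only on a middle range $M_1 < i \le M_2 = [N-\sqrt{N}]+1$ (Corollary~\ref{the only corollary}), proves separately that the tails $i \le M_1$ and $i > M_2$ contribute infinitesimally (Lemmas~\ref{alpha > 1} and~\ref{alpha < k-1}), handles $\alpha=0$ and $\alpha=k$ as special cases, and must also treat the degenerate situation $\mathbb{P}(X_1=e_1,\ldots,X_k=e_k)=0$ on its own (Lemma~\ref{pathological case}), since the ratio lemma (Lemma~\ref{hyperfinite lemma}) gives nothing when both sums are infinitesimal. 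Your additive argument bypasses all of this: the uniform bound $|a_i - b_i| \le 2k^2/(N-k)$ holds on the entire interior $\alpha \le i \le N-(k-\alpha)$ without any case split, the at most $k$ boundary indices are disposed of trivially, and summing against probability weights that total $1$ yields the result directly---no zero-probability case, no tail estimates, no special values of $\alpha$. The paper's route does deliver the sharper multiplicative statement $a_i/b_i \approx 1$ on the bulk (which it later interprets as a hypergeometric-to-binomial approximation), but for the theorem as stated your difference bound is both sufficient and more economical.
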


The rest of this paper will build toward a proof of Theorem \ref{conjecture}. The strategy is to use the following simple fact from nonstandard analysis:

\begin{lemma}\label{hyperfinite lemma}
If $\alpha_j, \beta_j \in {^*}\mathbb{R}_{\geq 0}$ (where $j \in H$ for some hyperfinite set $H$) and $\frac{\alpha_j}{\beta_j} \approx 1$ for all $j \in H$, then 
\begin{align}
    \frac{\sum_{j \in H} \alpha_j}{\sum_{j \in H} \beta_j} \approx 1.
\end{align}
\end{lemma}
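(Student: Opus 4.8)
The plan is to reduce the whole statement to a single \emph{uniform} infinitesimal error bound, after which it follows from the transfer of the ordinary triangle inequality for finite sums. First I would record the easy preliminaries: since each ratio $\alpha_j/\beta_j$ is assumed to make sense and $\beta_j \geq 0$, we in fact have $\beta_j > 0$ for every $j \in H$, and (discarding the trivial case $H = \emptyset$) this gives $\sum_{j \in H}\beta_j > 0$ by transfer, so the quotient in the conclusion is well-defined. I would also note that $j \mapsto \alpha_j$ and $j \mapsto \beta_j$ are internal, hence so is the function $g\colon H \to {^*}\mathbb{R}_{\geq 0}$ given by $g(j) = \left| \alpha_j/\beta_j - 1 \right|$.

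The crux is to produce one infinitesimal $\varepsilon$ that dominates every $g(j)$ at once. Since $H$ is hyperfinite and $g$ is internal, transfer of ``a real-valued function on a nonempty finite set attains its maximum'' provides $j_0 \in H$ with $g(j_0) = \max_{j \in H} g(j) \eqdef \varepsilon$; as $\varepsilon$ equals the particular infinitesimal $g(j_0)$, it is itself infinitesimal. Thus $|\alpha_j - \beta_j| = \beta_j\, g(j) \leq \varepsilon \beta_j$ for all $j \in H$ (this step is indifferent to whether any $\beta_j$ happens to be infinite). Summing over $H$ and invoking the transferred triangle inequality,
\[
\left| \sum_{j \in H}\alpha_j - \sum_{j \in H}\beta_j \right| \;\leq\; \sum_{j \in H} |\alpha_j - \beta_j| \;\leq\; \varepsilon \sum_{j \in H}\beta_j ,
\]
and dividing through by $\sum_{j \in H}\beta_j > 0$ yields $\left| \frac{\sum_{j\in H}\alpha_j}{\sum_{j\in H}\beta_j} - 1 \right| \leq \varepsilon \approx 0$, which is exactly the assertion.

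I do not expect a genuine obstacle here; the one place that deserves a sentence of justification is the extraction of the uniform bound $\varepsilon$, i.e.\ the claim that a hyperfinite family of infinitesimals has an infinitesimal maximum. This holds precisely because hyperfiniteness of $H$ makes the maximum attained, so it is literally one of the infinitesimals in the family. Everything else is bookkeeping with transferred elementary inequalities for finite sums.
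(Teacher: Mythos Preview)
Your proof is correct and follows essentially the same skeleton as the paper's: bound $|\alpha_j-\beta_j|$ by a multiple of $\beta_j$, sum over $H$ by transfer, and divide by $\sum_{j\in H}\beta_j$. The only difference is in how the uniform bound is obtained: the paper fixes an arbitrary \emph{standard} $\epsilon>0$, observes $1-\epsilon<\alpha_j/\beta_j<1+\epsilon$ for every $j$, sums, and then lets $\epsilon$ be arbitrary; you instead invoke transfer of ``maxima are attained on finite sets'' to produce a single infinitesimal $\varepsilon=\max_{j\in H}g(j)$ in one stroke. Both devices are standard in nonstandard analysis and yield the same conclusion with the same level of rigor; your version has the mild advantage of making explicit why the hyperfiniteness of $H$ (and internality of the sequences) is what drives the uniformity.
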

\begin{proof}
Let $H$, $\alpha_j$, and $\beta_j$ be as in the statement of the lemma. Note that $\alpha_j, \beta_j$ must all be strictly positive. For any real number $\epsilon \in \mathbb{R}_{>0}$, the condition that $\frac{\alpha_j}{\beta_j} \approx 1$ for all $j \in H$ implies that
$$1 - \epsilon < \frac{\alpha_j}{\beta_j} < 1 + \epsilon \text{ for all } j\in H.$$
Multiplying all sides of the above inequality by $\beta_j$, we have:
$$\beta_j - \epsilon \beta_j < {\alpha_j} < \beta_j + \epsilon \beta_j \text{ for all } j\in H.$$

Summing as $j$ varies over the hyperfinite set (in this step, we are also using transfer of a similar inequality for finite sums), we get:
\begin{align}
    \sum_{j \in H} (\beta_j - \epsilon \beta_j) < \sum_{j \in H} {\alpha_j} < \sum_{j \in H}(\beta_j + \epsilon \beta_j) \nonumber \\
    \Rightarrow (1 - \epsilon) \sum_{j \in H} \beta_j < \sum_{j \in H} \alpha_j < (1 + \epsilon) \sum_{j \in H} \beta_j. \label{alpha and beta}
\end{align}
Dividing all sides of \eqref{alpha and beta} by $\sum_{j \in H} \beta_j$ and noting that $\epsilon \in \mathbb{R}_{>0}$ was arbitrarily chosen completes the proof.
\end{proof}

For brevity in future computations, we define

\begin{align}\label{a_i}
    a_i &= {^*}\mathbb{P}\left(X_1 = e_1, \ldots, X_k = e_k \Big\vert Y_N = \frac{i}{N}\right) \\
    \text{ and } b_i &= \left(\frac{i}{N} \right)^{\alpha} \left(1 - \frac{i}{N} \right)^{k - \alpha} \text{ for all } i \in \{0, 1, 2, \ldots, N\}. \label{b_i}
\end{align}

Let us first try to understand the conditional probabilities $a_i$. As explained earlier, the $a_i$ are zero for $i < \alpha$. By summing over all possible cases, we have:

\begin{align}
    a_i &= {^*}\mathbb{P}\left(X_1 = e_1, \ldots, X_k = e_k \Big\vert Y_N = \frac{i}{N}\right) \nonumber \\
    &= \sum_{(u_1, \ldots, u_N) \in \mathcal{G}} {^*}\mathbb{P}\left(X_1 = u_1, \ldots, X_N = u_N \Big\vert X_1 + \ldots + X_N = i\right) \label{what we need 2},
\end{align}
where 
\begin{align*}
    \mathcal{G} &\defeq \left\{(u_1, \ldots u_N) \in \{0, 1\}^N: u_j = e_j \text{ for all } j \in \{1, \ldots, k\} \text{ and } \sum_{j = 1}^{N} u_j = i\right\}.
\end{align*}

It is clear that the internal cardinality of $\mathcal{G}$ is the number of ways of choosing $u_{k+1}, \ldots, u_N \in \{0,1\}$ such that $\sum_{j = k+1}^N u_j = i - \alpha$. By a simple counting argument, this yields:
\begin{align}\label{internal cardinality}
    \#(\mathcal{G}) = \binom{N - k}{i - \alpha}. 
\end{align}

Also, by the transfer of exchangeability of the $X_i$, it is clear that:
\begin{align}\label{exchangeability and conditioning}
   &{^*}\mathbb{P}\left(X_1 = u_1, \ldots, X_N = u_N \Big\vert X_1 + \ldots + X_N = i\right) \nonumber\\
    = &\frac{1}{\text{Number of ways of writing $i$ as a sum of $N$ zeroes and ones}}
\end{align}
for all $(u_1, \ldots, u_N) \in \mathcal{G}$. 

To see \eqref{exchangeability and conditioning}, first define $\mathcal{G}'$ as the set of those $(u_1, \ldots, u_N)$ such that $\sum_{j = 1}^N u_j = i$. Then exchangeability implies that 
\begin{align*}
    {^*}\mathbb{P}((X_1, \ldots, X_N) = \vec{u} ~&\vert~ X_1 + \ldots + X_N = i) \\
    = &{^*}\mathbb{P}((X_1, \ldots, X_N) = \vec{u}' \vert X_1 + \ldots + X_N = i) \text{ for all } \vec{u}, \vec{u}' \in \mathcal{G}'.
\end{align*}

Since the sum of ${^*}\mathbb{P}((X_1, \ldots, X_N) = \vec{u} ~\vert~ X_1 + \ldots + X_N = i)$ as $\vec{u}$ varies over $\mathcal{G}'$ is equal to one, it must be the case that 
\begin{align}\label{argument for G'}
    {^*}\mathbb{P}((X_1, \ldots, X_N) = \vec{u} ~\vert~ X_1 + \ldots + X_N = i) = \frac{1}{\#(\mathcal{G}')} \text{ for all } \vec{u} \in \mathcal{G}'.
\end{align}

In particular, since $\mathcal{G} \subseteq \mathcal{G}'$, equation \eqref{argument for G'} explains \eqref{exchangeability and conditioning}. Now, another simple counting argument shows that $\#(\mathcal{G}') = \binom{N}{i}$. Thus, \eqref{exchangeability and conditioning} becomes:

\begin{align}\label{exchangeability and conditioning 2}
{^*}\mathbb{P}\left(X_1 = u_1, \ldots, X_N = u_N \Big\vert X_1 + \ldots + X_N = i\right) &= \frac{1}{\binom{N}{i}}
\end{align}
for all $(u_1, \ldots, u_N) \in \mathcal{G}$. 

Using \eqref{exchangeability and conditioning 2} and \eqref{internal cardinality} in \eqref{what we need 2}, we obtain:

\begin{align}\label{what we need 3}
     a_i = \frac{\binom{N - k}{i - \alpha}}{\binom{N}{i}} \text{ for all } i \in \{1, \ldots N\},
\end{align}
where $\binom{N - k}{i - \alpha}$ is understood to be zero when $i < \alpha$.

Using \eqref{what we need 3}, we first prove Theorem \ref{conjecture} in a pathological case of zero probability (see Lemma \ref{pathological case}) that we will avoid afterward. Note that the conclusion of de Finetti's theorem implies that this pathological case can never happen, unless all the random variables $X_i$ are zero almost surely. However, since we are proving de Finetti's theorem, we have to take care of this case in a non-circular way, without using de Finetti's theorem. 

\begin{lemma}\label{pathological case}
Suppose $\mathbb{P}(X_1 = e_1, \ldots, X_k = e_k) = 0$. Then, \eqref{what we need} holds. 
\end{lemma}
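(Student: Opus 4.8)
The plan is to exploit the hypothesis $\mathbb{P}(X_1 = e_1, \ldots, X_k = e_k) = 0$ to show that \emph{both} sides of \eqref{what we need} are infinitesimal, which trivially gives the required approximate equality. The left side of \eqref{what we need} is, by transfer of \eqref{naive conditioning}, exactly equal to ${}^*\mathbb{P}(X_1 = e_1, \ldots, X_k = e_k) = {}^*(0) = 0$, so that side vanishes identically. Hence everything reduces to showing that the right side, $\sum_{i=0}^N b_i \cdot {}^*\mathbb{P}(Y_N = i/N)$, is infinitesimal, where $b_i = (i/N)^\alpha (1 - i/N)^{k-\alpha}$ as in \eqref{b_i}.

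To control the right side, I would first observe that, since $a_i = \binom{N-k}{i-\alpha}/\binom{N}{i}$ by \eqref{what we need 3} and the left side $\sum_i a_i \,{}^*\mathbb{P}(Y_N = i/N)$ equals $0$, and all summands are nonnegative, we get ${}^*\mathbb{P}(Y_N = i/N) = 0$ for every $i$ with $a_i \neq 0$, i.e. for every $i \in \{\alpha, \alpha+1, \ldots, N - k + \alpha\}$ (the range where $\binom{N-k}{i-\alpha}$ and $\binom{N}{i}$ are both positive). So ${}^*\mathbb{P}(Y_N = i/N)$ is supported on $i \in \{0, \ldots, \alpha - 1\} \cup \{N - k + \alpha + 1, \ldots, N\}$. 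The next step is to compare $b_i$ with $a_i$ on this leftover index set and argue that $b_i$ is ``small'' there too — specifically, one checks that for $i < \alpha$ we have $b_i = (i/N)^\alpha(\cdots) $, which need not be small by itself, so a cruder bound is needed: I would instead bound $\sum_i b_i\,{}^*\mathbb{P}(Y_N = i/N) \le \sum_i \,{}^*\mathbb{P}(Y_N = i/N)$ restricted to the support, and show this total mass is infinitesimal.

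Actually the cleanest route: bound $b_i \le C \cdot a_i$ for a fixed standard (or at least finite) constant $C$ uniformly in $i$ on the support of ${}^*\mathbb{P}(Y_N = \cdot/N)$ — but since that support is exactly where $a_i = 0$ while $b_i$ may be nonzero, this fails for small $i$. Therefore I would split the right-hand sum at $i = \alpha$: for $\alpha \le i \le N$, the factor ${}^*\mathbb{P}(Y_N = i/N)$ is dominated termwise by $a_i\,{}^*\mathbb{P}(Y_N = i/N)$ up to the finite constant $\sup_i b_i/a_i$ (which one shows is finite — this is essentially the content of the later main lemmas, and here in the zero-probability case $a_i\,{}^*\mathbb{P}(Y_N=i/N)=0$ anyway, so the contribution is $0$); for $0 \le i < \alpha$, one has $b_i = (i/N)^\alpha(1 - i/N)^{k-\alpha}$ with $i < \alpha \le k$, hence $b_i \le (\alpha/N)^\alpha \cdot 1$, which is infinitesimal since $N$ is infinite and $\alpha$ is a fixed standard natural number (if $\alpha = 0$ the set $\{0,\ldots,\alpha-1\}$ is empty and there is nothing to do). Summing the finitely many (at most $\alpha$) such terms, each bounded by the infinitesimal $(\alpha/N)^\alpha$ times a probability $\le 1$, gives an infinitesimal total. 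Combining the two parts, the right side of \eqref{what we need} is infinitesimal, matching the left side.

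\textbf{Main obstacle.} The delicate point is the interplay on the ``high'' end of the index range: although $a_i = 0$ forces ${}^*\mathbb{P}(Y_N = i/N) = 0$ for $\alpha \le i \le N-k+\alpha$, I must be careful that the remaining high indices $N - k + \alpha < i \le N$ are also handled, and there $b_i = (i/N)^\alpha(1 - i/N)^{k-\alpha}$ with $k - \alpha$ copies of a factor $1 - i/N \le k/N$ that is infinitesimal (again since $k$ is a fixed standard integer and $i$ is within $k$ of $N$) — provided $k - \alpha \ge 1$; if $k = \alpha$ this high range argument instead needs the $a_i = 0 \Rightarrow {}^*\mathbb{P}(Y_N = i/N) = 0$ fact, noting that when $\alpha = k$ one has $a_i = \binom{N-k}{i-k}/\binom{N}{i} > 0$ for all $k \le i \le N$, so the support of ${}^*\mathbb{P}(Y_N=\cdot/N)$ is confined to $\{0,\ldots,k-1\}$ and the earlier infinitesimal bound finishes it. Assembling these cases carefully — and confirming that the constant $\sup_{i \ge \alpha} b_i/a_i$ is genuinely finite so the bookkeeping is legitimate even though those terms vanish — is the one place demanding real attention; the rest is routine.
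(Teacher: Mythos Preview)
Your argument is correct, but it reaches the conclusion by a somewhat different route than the paper.

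The paper proves directly, via a covering argument and exchangeability, that ${^*}\mathbb{P}(Y_N = i/N) = 0$ for \emph{every} $i \geq \alpha$: it observes that if $X_1 + \ldots + X_N = i \geq \alpha$ then some $k$-subcollection of the $X_j$'s must have realized the pattern $(e_1,\ldots,e_k)$, and by exchangeability and hyperfinite subadditivity each such event has probability zero. This single probabilistic observation wipes out the whole tail $i \geq \alpha$ in one stroke, leaving only the finite stub $0 \leq i \leq \alpha - 1$, where the left side vanishes (since $a_i = 0$) and the right side is a standard-length sum of infinitesimals.

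You instead exploit \eqref{naive conditioning} to see that the left side equals the standard number $0$, and then use nonnegativity of the summands together with the explicit formula \eqref{what we need 3} to deduce ${^*}\mathbb{P}(Y_N = i/N) = 0$ precisely on the support of $a_i$, namely $\alpha \leq i \leq N - k + \alpha$. This is slicker in that it avoids the covering/subadditivity argument entirely, but the price is that you must separately dispose of the top stub $N - k + \alpha < i \leq N$, which you do correctly by bounding $(1 - i/N)^{k-\alpha}$ when $k > \alpha$ and by noting the stub is empty when $k = \alpha$. Your worry about needing $\sup_i b_i/a_i$ to be finite is unnecessary: on the middle range the terms $b_i\,{^*}\mathbb{P}(Y_N = i/N)$ are exactly zero, not merely infinitesimal, so no comparison constant is required.
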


\begin{proof}
Suppose $\mathbb{P}(X_1 = e_1, \ldots, X_k = e_k) = 0$. Suppose $i \geq \alpha$ and consider the event $\left\{Y_N = \frac{i}{N}\right\}$, which is the same as the event $\{X_1 + \ldots + X_N = i\}$. 

If the sum of $N$ zero-one random variables is $i \geq \alpha$ then some subcollection of $k$ such random variables must have had exactly $\alpha$ ones. Therefore, if $\mathcal{C}$ denotes the collection of all $k$ tuples of distinct indices from $\{1, \ldots, N\}$ (so that the internal cardinality $\#(\mathcal{C})$ is $\binom{N}{k}$), then we have 
$$\{X_1 + \ldots X_N = i\} \subseteq \bigcup_{(j_1, \ldots j_k) \in \mathcal{C}} \{X_{j_1} = e_1, \ldots, X_{j_k} = e_k\}.$$

By exchangeability, all events in the union on the right have the same probability as the event $\{X_1 = e_1, \ldots, X_k = e_k\}$, which is assumed to have probability zero. Since ${^*}\mathbb{P}$ is hyperfinitely subadditive, this implies that ${^*}\mathbb{P}(X_1 + \ldots + X_N = i) = 0$ whenever $i \geq \alpha$. Thus (using \eqref{what we need 3}), proving \eqref{what we need} is equivalent to proving the following:

\begin{align}\label{what we need 4}
     &\sum_{i = 0}^{\alpha - 1} \frac{\binom{N - k}{i - \alpha}}{\binom{N}{i}} \mathbb{P}\left(Y_N = \frac{i}{N}\right) \nonumber \\ 
     \approx &\sum_{i = 0}^{\alpha - 1} \left(\frac{i}{N} \right)^{\alpha} \left(1 - \frac{i}{N} \right)^{k - \alpha} \mathbb{P}\left(Y_N = \frac{i}{N}\right). 
\end{align}

But the left side of \eqref{what we need 4} is zero (as $\binom{N - k}{i - \alpha} = 0$ for $i < \alpha$), while the right side is an infinitesimal (being a finite sum of infinitesimals). This completes the proof.
\end{proof}

Also using \eqref{what we need 3}, we obtain the following result about the ratio of $a_i$ and $b_i$:

\begin{lemma}\label{ai over bi}
There exists a constant $r \approx 1$, such that for each $i \in {^*}\mathbb{N}_{>k}$, we have 
\begin{align}\label{first inequality lemma}
    \frac{a_i}{b_i} = \frac{i!}{(i - \alpha)!i^\alpha} \left(1 - \frac{1}{N - i} \right) \ldots \left(1 - \frac{k - \alpha - 1}{N - i} \right) r \leq r. 
\end{align}
\end{lemma}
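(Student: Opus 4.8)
\emph{Proof plan.} The plan is to obtain the displayed equality by a direct computation starting from the explicit formula \eqref{what we need 3} for $a_i$ and the definition \eqref{b_i} of $b_i$, and then to read off the bound $a_i/b_i \le r$ from the shape of the resulting product. First I would substitute $a_i = \binom{N-k}{i-\alpha}/\binom{N}{i}$ and $b_i = \left(\frac{i}{N}\right)^{\alpha}\left(1 - \frac{i}{N}\right)^{k-\alpha}$, expand both binomial coefficients using the transfer of the identity $\binom{a}{b} = a!/(b!\,(a-b)!)$, and reorganize the quotient $a_i/b_i$ into three mutually independent blocks:
\begin{align*}
\frac{a_i}{b_i} = \left[\prod_{m=0}^{\alpha - 1}\left(1 - \frac{m}{i}\right)\right]\left[\prod_{m=0}^{k - \alpha - 1}\left(1 - \frac{m}{N - i}\right)\right]\left[\prod_{m=0}^{k-1}\frac{N}{N-m}\right].
\end{align*}
Here the first block is exactly $\frac{i!}{(i-\alpha)!\,i^{\alpha}}$; the second block has a trivial $m=0$ term and so equals $\left(1 - \frac{1}{N-i}\right)\cdots\left(1 - \frac{k-\alpha-1}{N-i}\right)$; and the third block is a quantity $r \defeq \prod_{m=0}^{k-1}\frac{N}{N-m} = \frac{N^{k}(N-k)!}{N!}$ that depends only on $N$ and $k$, not on $i$. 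This is precisely the claimed identity.

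Two elementary observations then finish the argument. Since $k \in \mathbb{N}$ is standard while $N$ is infinite, for each $m \in \{0, \ldots, k-1\}$ the factor $\frac{N}{N-m} = \frac{1}{1 - m/N}$ is finite and infinitely close to $1$, and since a product of standardly many factors, each finite and infinitely close to $1$, is again infinitely close to $1$, we get $r \approx 1$. For the inequality, note that when $k < i \le N$ every factor $1 - \frac{m}{i}$ with $0 \le m \le \alpha - 1$ lies in $(0,1]$ (recall $\alpha \le k < i$), so $\frac{i!}{(i-\alpha)!\,i^{\alpha}} \le 1$; likewise every factor $1 - \frac{m}{N-i}$ is at most $1$ (it is nonnegative unless some factor vanishes, in which case that whole block is $0$), so the product of the first two blocks lies in $[0,1]$. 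Multiplying by $r \ge 0$ yields $\frac{a_i}{b_i} \le r$.

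The proof is essentially bookkeeping, so I do not anticipate a genuine obstacle; the only points that need care are carrying out the factorial cancellations correctly in the three-block regrouping and justifying that the standard-length product defining $r$ is infinitely close to $1$. The one truly degenerate endpoint, $i = N$ with $\alpha < k$ (where $a_i = b_i = 0$, and the right-hand side is not literally defined once $\alpha \le k-2$), is harmless: such terms contribute $0$ to both sums appearing in \eqref{what we need} and so are irrelevant to Theorem \ref{conjecture}, and may simply be excluded from the statement.
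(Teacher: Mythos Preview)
Your proposal is correct and follows essentially the same route as the paper: both compute $a_i/b_i$ from \eqref{what we need 3} and \eqref{b_i}, factor it into the three blocks $\frac{i!}{(i-\alpha)!\,i^{\alpha}}$, $\prod_{m=0}^{k-\alpha-1}\bigl(1-\tfrac{m}{N-i}\bigr)$, and $r=\frac{N^k}{N(N-1)\cdots(N-k+1)}$, and observe that $r\approx 1$ since $k$ is standard and $N$ infinite. You are, if anything, more explicit than the paper in justifying the inequality $a_i/b_i\le r$ and in flagging the harmless degenerate endpoint $i=N$.
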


\begin{proof}
From \eqref{what we need 3} and \eqref{b_i}, we obtain:
\begin{align*}
    \frac{a_i}{b_i} &= \frac{\frac{(N-k)(N-k-1)\ldots (N - k - (i - \alpha - 1))}{(i - \alpha)!}}{\frac{N(N-1) \ldots (N - (i-1))}{i!}\left( \frac{i}{N}\right)^{\alpha} \left(1 - \frac{i}{N} \right)^{k - \alpha}} \\
   =  &\frac{i!}{(i - \alpha)!i^\alpha} \frac{N^{k}}{(N - i)^{k - \alpha}} \frac{(N-k)(N-k-1)\ldots (N - k - (i - \alpha - 1))}{N(N-1) \ldots (N - (i-1))} \\
     = & \frac{i!}{(i - \alpha)!i^\alpha}  \frac{N^k (N-i) (N - (i+1)) \ldots N - (i + k - \alpha - 1)}{N(N-1)\ldots (N - (k-1)) (N - i)^{k - \alpha}} .
\end{align*}

Let 
\begin{align}\label{r definition}
    r \defeq &\frac{N^k}{N(N-1)\ldots (N - (k-1))} = \frac{1}{1 \left(1-\frac{1}{N}\right)\ldots \left(1 - \frac{k-1}{N}\right)} \approx 1. 
\end{align}

Thus the proof is complete in view of the following:
\begin{align*}
     &\frac{(N - i)(N - (i + 1)) \ldots (N - (i + k - \alpha - 1))}{(N - i)^{k - \alpha}} \\
     =  &1 \left(1 - \frac{1}{N - i} \right) \ldots \left(1 - \frac{k - \alpha - 1}{N - i} \right). 
\end{align*}
\end{proof}

\begin{lemma}\label{alpha > 1}
Suppose $\alpha \geq 1$. There is an $M_1 > \mathbb{N}$ such that $M_1 < N - \sqrt{N}$ and 
\begin{align*}
    \sum_{i = 0}^{M_1} a_i {^*}\mathbb{P}\left(Y_N = \frac{i}{N}\right) \approx 0 \text{ and }  \sum_{i = 0}^{M_1} b_i {^*}\mathbb{P}\left(Y_N = \frac{i}{N}\right) \approx 0.
\end{align*}
\end{lemma}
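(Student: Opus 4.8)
The plan is to exhibit an explicit $M_1$ and then bound the two partial sums crudely, term by term. A convenient choice is $M_1 = \lfloor \sqrt{N} \rfloor$. This is genuinely infinite: if $\lfloor\sqrt{N}\rfloor$ equalled some standard $m \in \mathbb{N}$ we would have $N < (m+1)^2 \in \mathbb{N}$, contradicting $N > \mathbb{N}$. Also $M_1 \leq \sqrt{N} < N - \sqrt{N}$, the last inequality being the transfer of the elementary fact that $2\sqrt{n} < n$ for every natural number $n > 4$. Finally, since $\alpha \geq 1$ we have $a_0 = 0$ (by the convention in \eqref{what we need 3}, since $\binom{N-k}{-\alpha}=0$) and $b_0 = 0^{\alpha} = 0$, so both sums effectively start at $i = 1$.

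For the $b_i$-sum, I would observe that for $1 \leq i \leq M_1$, $b_i = \left(\frac{i}{N}\right)^{\alpha}\left(1 - \frac{i}{N}\right)^{k - \alpha} \leq \left(\frac{i}{N}\right)^{\alpha} \leq \frac{i}{N} \leq \frac{M_1}{N} \leq \frac{1}{\sqrt{N}}$, using $\alpha \geq 1$ together with $0 \le \tfrac{i}{N} \le 1$. Since $\mathbb{P}_N$ is an internal probability measure, $\sum_{i=0}^{N} {^*}\mathbb{P}(Y_N = i/N) = 1$, and hence $\sum_{i=0}^{M_1} b_i\, {^*}\mathbb{P}(Y_N = i/N) \leq \frac{1}{\sqrt{N}} \approx 0$.

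For the $a_i$-sum, the one input not already available verbatim is the estimate $a_i \leq r\left(\frac{i}{N}\right)^{\alpha}$ for all $i$ with $\alpha \le i \le M_1$, where $r \approx 1$ is the constant of \eqref{r definition}. For $i > k$ this is exactly what the proof of Lemma \ref{ai over bi} produces; for $\alpha \le i \le k$ it follows from the same manipulation of \eqref{what we need 3}, by writing $a_i = \frac{i!}{(i-\alpha)!}\cdot\frac{(N-i)!}{(N-k-i+\alpha)!}\cdot\frac{(N-k)!}{N!}$ (the factorials make sense since $M_1 < N-\sqrt{N} < N - k + \alpha$ for infinite $N$), bounding $\frac{i!}{(i-\alpha)!} = i(i-1)\cdots(i-\alpha+1) \le i^{\alpha}$ and $\frac{(N-i)!}{(N-k-i+\alpha)!} = (N-i)(N-i-1)\cdots(N-i-k+\alpha+1) \le N^{k-\alpha}$, and noting $\frac{(N-k)!}{N!}\,N^{k} = r$. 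Granting this, $\sum_{i=0}^{M_1} a_i\, {^*}\mathbb{P}(Y_N = i/N) \le r\sum_{i=0}^{M_1}\left(\frac{i}{N}\right)^{\alpha}{^*}\mathbb{P}(Y_N = i/N) \le r\left(\frac{M_1}{N}\right)^{\alpha} \le \frac{r}{\sqrt{N}} \approx 0$, since $r$ is finite.

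I do not expect a deep obstacle here. The two points that need a little care are the choice of $M_1$ — it must simultaneously be infinite, lie below $N - \sqrt{N}$, and make $(M_1/N)^{\alpha}$ infinitesimal, all of which $\lfloor\sqrt{N}\rfloor$ achieves — and the fact that Lemma \ref{ai over bi} as stated only supplies $a_i \le r b_i$ for $i > k$, so the short initial range $\alpha \le i \le k$ must be absorbed by rerunning the (routine) computation above. The hypothesis $\alpha \ge 1$ is used essentially: it kills the $i=0$ terms and makes the weights $(i/N)^{\alpha}$ small near $i = 0$; for $\alpha = 0$ the analogous statement near $i = 0$ is false (the identically-zero sequence is a counterexample), which is presumably why that case is treated separately.
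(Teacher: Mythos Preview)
Your proof is correct and follows the same basic strategy as the paper's: choose an infinite $M_1$ well below $N$, bound the $b_i$ crudely using $\alpha\ge 1$, and control the $a_i$ via the inequality $a_i\le r\,b_i$ from Lemma~\ref{ai over bi}. The technical execution differs slightly: the paper takes $M_1<N^{1/3}$ and bounds the \emph{unweighted} sum $\sum_{i\le M_1} b_i \le M_1^{1+\alpha}/N^\alpha$, whereas you take $M_1=\lfloor\sqrt{N}\rfloor$ and instead bound $\max_{i\le M_1} b_i \le 1/\sqrt{N}$ against the total probability mass $\sum_i {^*}\mathbb{P}(Y_N=i/N)=1$; your use of the probability weights is a bit cleaner and permits the larger cutoff. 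You also fill in explicitly the short-range estimate $a_i\le r(i/N)^\alpha$ for $\alpha\le i\le k$, which the paper leaves implicit in the assertion that $\sum_{i\le k} a_i$ is infinitesimal.
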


\begin{proof}
Fix any $M_1 > \mathbb{N}$ such that $M_1 < \min\{N^{\frac{1}{3}}, N - \sqrt{N}\}$. 

Note that $\sum_{i = 0}^k a_i$ is an infinitesimal. Hence, by \eqref{first inequality lemma}, it suffices to show that $\sum_{i = 0}^{M_1} b_i$ is an infinitesimal. Now, 

\begin{align*}
    \sum_{i = 0}^{M_1} b_i = \sum_{i = 0}^{M_1} \left(\frac{i}{N} \right)^{\alpha} \left( 1 - \frac{i}{N} \right)^{k - \alpha}  \leq \frac{{M_1}^{1 + \alpha}}{N^{\alpha}} < \frac{{N}^{\frac{1 + \alpha}{3}}}{N^\alpha}= \frac{1}{N^{\frac{2\alpha - 1}{3}}}.
\end{align*}

But the right side is an infinitesimal because $2\alpha > 1$ (as $\alpha \geq 1$ is assumed in the statement of the lemma). This completes the proof. 
\end{proof}

For the rest of this paper, let 
\begin{align}\label{definition of M_2}
    M_2 \defeq [N - \sqrt{N}] + 1,
\end{align}
where $[\cdot]$ is the greatest integer function.

\begin{corollary}\label{the only corollary}
For $i \in {^*}\mathbb{N}$ with $\mathbb{N} < i \leq M_2$, we have $\frac{a_i}{b_i} \approx 1$. 
\end{corollary}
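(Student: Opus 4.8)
The plan is to invoke Lemma~\ref{ai over bi} and check that each of the three factors it produces is infinitely close to $1$. Fix $i \in {^*}\mathbb{N}$ with $\mathbb{N} < i \leq M_2$. Since $i > k$ and $1 \leq i \leq M_2 < N$ (so that $b_i > 0$ and the ratio is legitimate), Lemma~\ref{ai over bi} supplies a constant $r \approx 1$ with
\begin{align*}
    \frac{a_i}{b_i} = \frac{i!}{(i-\alpha)!\, i^{\alpha}} \cdot \left(1 - \frac{1}{N-i}\right)\cdots\left(1 - \frac{k-\alpha-1}{N-i}\right) \cdot r.
\end{align*}

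First I would rewrite the leading factor as $\frac{i!}{(i-\alpha)!\, i^{\alpha}} = \prod_{j=0}^{\alpha-1}\left(1 - \frac{j}{i}\right)$. Because $i$ is infinite and $\alpha$ is a fixed standard integer, this is a product of standardly many terms, each infinitely close to $1$, and hence is itself infinitely close to $1$. The middle factor is an empty product (equal to $1$) when $\alpha \geq k-1$, and otherwise equals $\prod_{j=1}^{k-\alpha-1}\left(1 - \frac{j}{N-i}\right)$; the point here is that $N-i$ is infinite, since $i \leq M_2 = [N-\sqrt{N}]+1$ forces $N-i \geq \sqrt{N} - 1 > \mathbb{N}$. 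So again this is a product of standardly many terms each $\approx 1$, hence $\approx 1$. Multiplying the two factors together with $r \approx 1$ yields $\frac{a_i}{b_i} \approx 1$.

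The only real subtlety is guaranteeing that $N - i$ is infinite (not merely positive), which is precisely the reason the threshold $M_2$ was chosen with $N - M_2$ of order $\sqrt{N}$. Apart from that, the argument rests only on the elementary observation that a product of a standard finite number of quantities each infinitely close to $1$ is again infinitely close to $1$ — an observation that genuinely uses the standardness of $k$ and $\alpha$, since it would fail for a hyperfinite product.
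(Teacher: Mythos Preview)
Your argument is correct and follows the same route as the paper's proof: invoke Lemma~\ref{ai over bi}, factor $\frac{i!}{(i-\alpha)!\,i^{\alpha}}$ as $\prod_{j=0}^{\alpha-1}(1-j/i)\approx 1$ using that $i$ is infinite, and handle the middle product by observing that $N-i \geq \sqrt{N}-1$ is infinite (the paper states $N-i\geq\sqrt{N}$, but your bound is the sharper one and the difference is immaterial). Your explicit remark about the empty-product case and the standardness of $k,\alpha$ is a nice touch but does not change the substance of the argument.
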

\begin{proof}
Note that $\frac{i!}{(i - \alpha)!i^\alpha} = 1$ when $\alpha = 0, 1$. And for $\alpha \geq 2$, we have
\begin{align*}
  \frac{i!}{(i - \alpha)!i^\alpha}  = \left(1 - \frac{1}{i}\right) \ldots \left(1 - \frac{\alpha - 1}{i}\right) \approx 1 \text{ if } i > \mathbb{N}.
\end{align*}

Thus, we have:
\begin{align}\label{i factorial term}
     \frac{i!}{(i - \alpha)!i^\alpha} \approx 1 \text{ for all } i >  \mathbb{N}.
\end{align}

Now let $i$ be as in the statement of the corollary, i.e., $\mathbb{N} < i \leq M_2$. Then, $N - i \geq N - M_2 \geq \sqrt{N}$. Then,

\begin{align}\label{other part}
    \left(1 - \frac{1}{N - i} \right) \ldots \left(1 - \frac{k - \alpha - 1}{N - i} \right) \approx 1 \text{ as well.}
\end{align}

Using \eqref{i factorial term} and \eqref{other part} in \eqref{first inequality lemma} completes the proof.
\end{proof}

\begin{lemma}\label{alpha < k-1} Suppose $\alpha \leq (k -1)$. Then
\begin{align}\label{M_2}
     \sum_{i = M_2 + 1}^N a_i {^*}\mathbb{P}\left(Y_N = \frac{i}{N}\right) \approx 0 \text{ and }  \sum_{i = M_2 + 1}^N b_i {^*}\mathbb{P}\left(Y_N = \frac{i}{N}\right) \approx 0.
\end{align}
\end{lemma}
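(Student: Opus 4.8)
The plan is to run the mirror image of the proof of Lemma \ref{alpha > 1}: there the hypothesis $\alpha \geq 1$ was used to kill the contribution of the indices $i$ near $0$, and here the hypothesis $\alpha \leq k - 1$ (equivalently $k - \alpha \geq 1$) will kill the contribution of the indices $i$ near $N$. As in Lemma \ref{alpha > 1}, I would first reduce both estimates in \eqref{M_2} to the one involving the $b_i$. Every index $i$ in the range $\{M_2 + 1, \ldots, N\}$ is infinite (as $M_2$ is), hence lies in ${^*}\mathbb{N}_{>k}$, so inequality \eqref{first inequality lemma} of Lemma \ref{ai over bi} applies and gives $a_i \leq r b_i$ with $r \approx 1$ the fixed finite hyperreal appearing there; therefore
\begin{align*}
\sum_{i = M_2 + 1}^N a_i \, {^*}\mathbb{P}\left(Y_N = \frac{i}{N}\right) \;\leq\; r \sum_{i = M_2 + 1}^N b_i \, {^*}\mathbb{P}\left(Y_N = \frac{i}{N}\right),
\end{align*}
and since $r$ is finite it suffices to prove that the right-hand sum is infinitesimal. (The only index at which \eqref{first inequality lemma} is formally undefined is $i = N$, where $N - i = 0$ and $b_N = 0$; but then $a_N = \binom{N-k}{N-\alpha}/\binom{N}{N} = 0$ as well, since $N - \alpha > N - k$ when $\alpha \leq k-1$, so that term contributes nothing to either side and may be discarded.)

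The crux is that on the whole range $\{M_2 + 1, \ldots, N\}$ every $b_i$ is \emph{uniformly} infinitesimal. Since $M_2 = [N - \sqrt{N}] + 1 \geq N - \sqrt{N}$, any $i \geq M_2 + 1$ satisfies $N - i \leq N - M_2 - 1 \leq \sqrt{N} - 1$, so that $1 - \frac{i}{N} = \frac{N - i}{N} < \frac{1}{\sqrt{N}}$. Combining this with $k - \alpha \geq 1$ and $0 \leq \frac{i}{N} \leq 1$ gives
\begin{align*}
b_i = \left(\frac{i}{N}\right)^{\alpha}\left(1 - \frac{i}{N}\right)^{k - \alpha} \leq 1 - \frac{i}{N} < \frac{1}{\sqrt{N}} \quad \text{for every } i \in \{M_2 + 1, \ldots, N\}.
\end{align*}

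I would then conclude by factoring this uniform bound out of the sum and using that the probability masses total at most one:
\begin{align*}
\sum_{i = M_2 + 1}^N b_i \, {^*}\mathbb{P}\left(Y_N = \frac{i}{N}\right) \;\leq\; \frac{1}{\sqrt{N}} \sum_{i = M_2 + 1}^N {^*}\mathbb{P}\left(Y_N = \frac{i}{N}\right) \;\leq\; \frac{1}{\sqrt{N}} \;\approx\; 0 .
\end{align*}
The one point that needs care — and the reason one cannot simply invoke ``a sum of infinitesimals is infinitesimal'' — is that the index set $\{M_2 + 1, \ldots, N\}$ is hyperfinite rather than finite, so it is genuinely essential that the bound $\frac{1}{\sqrt{N}}$ on $b_i$ is uniform in $i$ and that the complementary factor is a probability mass (hence at most $1$). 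This is exactly what the hypothesis $\alpha \leq k - 1$ buys us, and it is also why, unlike in Lemma \ref{alpha > 1}, no further shrinking of the cutoff below $N - \sqrt{N}$ is needed here.
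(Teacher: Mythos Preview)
Your proof is correct and follows essentially the same route as the paper: reduce the $a_i$-sum to the $b_i$-sum via the inequality $a_i \leq r b_i$ from Lemma~\ref{ai over bi}, then bound $b_i$ uniformly by a power of $1/\sqrt{N}$ over the tail range and use that the total probability mass is at most $1$. Your treatment is in fact slightly more careful than the paper's, since you explicitly dispose of the boundary index $i = N$ (where $N - i = 0$ makes \eqref{first inequality lemma} formally undefined but $a_N = b_N = 0$ anyway).
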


\begin{proof}
By \eqref{first inequality lemma}, it suffices to show that the second sum is an infinitesimal. Since the $b_i$ are all positive, we have the following estimate for the second term:
\begin{align*}
     \sum_{i = M_2 + 1}^N b_i {^*}\mathbb{P}\left(Y_N = \frac{i}{N}\right) &\leq \left(\max_{M_2 + 1 \leq i \leq N} b_i \right)  \sum_{i = M_2 + 1}^N {^*}\mathbb{P}\left(Y_N = \frac{i}{N}\right)  \\
     &\leq \max_{M_2 + 1 \leq i \leq N} \left(\frac{i}{N} \right)^{\alpha} \left(1 - \frac{i}{N} \right)^{k - \alpha} \\
      &\leq 1\cdot \left(1 - \frac{N - \sqrt{N}}{N} \right)^{k - \alpha} \\
      &= \left(\frac{1}{\sqrt{N}}\right)^{k - \alpha},
\end{align*}
where the last term is infinitesimal since $k - \alpha \geq 1$. 
\end{proof}

We are now in a position to prove Theorem \ref{conjecture}. We restate it here for convenience.

\main*

\begin{proof}
The case when $\alpha = 0$ is verified directly by plugging in $\alpha = 0$ to the formulae for $a_i$ and $b_i$ and using Lemma \ref{hyperfinite lemma}.

In the case when $\alpha = k$, using \eqref{what we need} and \eqref{b_i}, we get:
\begin{align*}
    \frac{a_i}{b_i} = \frac{\binom{N-k}{i-k}}{\binom{N}{i} \frac{i^k}{N^k}} = \frac{i!}{(i - k)!i^k} \frac{(N - k)! N^k}{N!}.
\end{align*}
This expression is infinitesimally close to $1$ whenever $i > \mathbb{N}$. Thus, Lemma \ref{alpha > 1} and Lemma \ref{hyperfinite lemma} complete the proof in this case. 

By Lemma \ref{pathological case}, we may also assume that $$\mathbb{P}(X_1 = e_1, \ldots, X_k = e_k) \neq 0.$$ Then using \eqref{naive conditioning}, we obtain
$$\sum_{i = 0}^N {^*}\mathbb{P}\left(X_1 = e_1, \ldots, X_k = e_k \Big\vert Y_N = \frac{i}{N}\right) {^*}\mathbb{P}\left(Y_N = \frac{i}{N}\right) \not\approx 0.$$

Thus, by Lemmas \ref{alpha > 1} and \ref{alpha < k-1}, we obtain:

\begin{gather*}
    \sum_{i = 0}^N {^*}\mathbb{P}\left(X_1 = e_1, \ldots, X_k = e_k \Big\vert Y_N = \frac{i}{N}\right) {^*}\mathbb{P}\left(Y_N = \frac{i}{N}\right) \\ \approx \sum_{i = M_1 + 1}^{M_2} a_i {^*}\mathbb{P}\left(Y_N = \frac{i}{N}\right),
\end{gather*}
and
\begin{gather*}
\sum_{i = 0}^N \left(\frac{i}{N} \right)^{\alpha} \left(1 - \frac{i}{N} \right)^{k - \alpha} {^*}\mathbb{P}\left(Y_N = \frac{i}{N}\right) \approx \sum_{i = M_1 + 1}^{M_2} b_i {^*}\mathbb{P}\left(Y_N = \frac{i}{N}\right).
\end{gather*}

Corollary \ref{the only corollary} together with Lemma \ref{hyperfinite lemma} now complete the proof in this case. 
\end{proof}

As $e_1, \ldots, e_k$ was an arbitrarily fixed finite sequence of zeros and ones, this proves de Finetti's Theorem \ref{de finetti theorem} using Theorem \ref{appendix theorem}.

We finish this section with a combinatorial-probabilistic interpretation of the proof. A main ingredient in the proof was Corollary \ref{the only corollary}. It shows that when $i$ is large (in the sense that it is hyperfinite) but not too large (in the sense that it is less than $M_2 = [N - \sqrt{N}] + 1$), then $\frac{a_i}{b_i}$ is infinitesimally close to $1$. Looking at the expressions \eqref{what we need 3} and \eqref{b_i} for $a_i$ and $b_i$ respectively, we can express the ratio as follows: 

\begin{align*}
    \frac{a_i}{b_i} = \frac{\binom{N - k}{i - \alpha} \binom{k}{\alpha}}{\binom{N}{i}} \cdot \frac{1}{\binom{k}{\alpha}\left(\frac{i}{N} \right)^{\alpha} \left(1 - \frac{i}{N} \right)^{k - \alpha}}.
\end{align*}

The first term on the right is an expression related to a certain hypergeometric random variable, while the second term is related to a certain binomial random variable. We can thus interpret Corollary \ref{the only corollary} as a statement about asymptotically approximating a hypergeometric random variable with a binomial random variable. More explicitly, Corollary \ref{the only corollary} says that as long as $i$ is neither too small not too large, then the probabilities $P_1$ and $P_2$ described by the following are very close to each other in the sense that $\frac{P_1}{P_2} \approx 1$:
\begin{enumerate}[(1)]
    \item\label{P1} Uniformly choose a random subset of size $i$ (here $i \geq \alpha$) from $\{1, \ldots, N\}$: thus all the $\binom{N}{i}$ subsets are equally likely to be chosen. Then $P_1$ is the probability that exactly $\alpha$ elements of $\{1, \ldots, k\}$ appear in this random subset of size $i$. 
    
    \item\label{P2} Take a coin with a probability of Heads being $\frac{i}{N}$. Then $P_2$ is the probability that exactly $\alpha$ Heads appear in $k$ independent tosses of this coin. 
\end{enumerate}

\appendix\section{Background from nonstandard analysis}
 \setcounter{theorem}{0}
     \renewcommand{\thetheorem}{\thesection.\arabic{theorem}}
This appendix provides an introduction to the nonstandard methods used in the paper. A significant part of this discussion is an abbreviated version of a similar introduction in Alam \cite{Alam}. Very roughly, a nonstandard extension of a set $S$ is a superset ${^*}S$ that preserves the ``first-order'' properties of $S$. That is, a property which is expressible using finitely many symbols without quantifying over any collections of subsets of $S$ is true if and only if the same property is true of ${^*}S$. This is called the \textbf{transfer principle} (or just \textit{transfer} for brevity). The set ${^*}S$ should contain, as a subset, ${^*}T$ for each $T \subseteq S$. Like subsets, other mathematical objects defined on $S$ also have extensions. So, a function $f\colon S \rightarrow T$ extends to a map ${^*}f\colon {^*}S \rightarrow {^*}T$, and relations on $S$ extend to relations on ${^*}S$. Hence there is a binary relation ${^*}<$ on ${^*}\mathbb{R}$, which we still denote by $<$ (an abuse of notation that we frequently make), and which is the same as the usual order when restricted to $\mathbb{R}$. 

In general, we fix a set $S$ consisting of atoms (that is, we view each element of $S$ as an ``individual'' without any structure, set-theoretic or otherwise), and extend what is called the superstructure $V(S)$ of $S$, which is defined inductively as follows (here, for any set $A$, the set $\mathcal{P}(A)$ denotes the power set of $A$):

\begin{equation}
    \label{superstructure} 
    \begin{array}{rcl} 
     V_0(S) &\defeq &  S, \\
     V_{n}(S) &\defeq & \mathcal{P}(V_{n-1}(S)) \text{ for all } n \in \mathbb{N}, \\
     V(S) &\defeq &  \bigcup_{n \in \mathbb{N} \cup \{0\}} V_n(S).
     \end{array}
\end{equation}

Choosing $S$ suitably, the superstructure $V(S)$ can be made to contain all mathematical objects relevant for a given theory. For example, if $\mathbb{R} \subseteq S$, then all collections of subsets of $\mathbb{R}$ live as objects in $V_2(S) \subseteq V(S)$. For a finite subset consisting of $k$ objects from $V_m(S)$, the ordered $k$-tuple of those objects is an element of $V_n(S)$ for some larger $n$; and hence the set of all $k$-tuples of objects in $V_m(S)$ lies as an object in $V_{n+1}(S)$. For example, if $x, y \in V_m(S)$, then the ordered pair $(x, y)$ is just the set $\{\{x\}, \{x,y\}\} \in V_{m+2}(S)$. Identifying functions and relations with their graphs, $V{(S)}$ also contains, if $\mathbb{R} \subseteq S$, all functions from $\mathbb{R}^n$ to $\mathbb{R}$, all relations on $\mathbb{R}^n$, etc., for all $n \in \mathbb{N}$. 

We extend the superstructure $V(S)$ via a \textit{nonstandard map},
$${^*}\colon V(S) \rightarrow V({^*}S),$$
which, by definition, is any map satisfying the following axioms:
\begin{enumerate}
    \item[(NS1)]\label{NS1} The transfer principle holds.
    \item[(NS2)]\label{NS2} ${^*}\alpha = \alpha$ for all $\alpha \in S$.
    \item[(NS3)]\label{NS3} $\{{^*}a: a \in A\} \subsetneq {^*}A$ for any infinite set $A \in V(S)$.
\end{enumerate}

A nonstandard map may not be unique. In practice, however, we fix a standard universe $V(S)$ and a nonstandard map ${^*}$. The reader is referred to \cite[Theorem 4.4.5, p. 268]{Model_Theory} or \cite[Chapter 1]{Albeverio} for a proof of the existence of a nonstandard map.

An object that belongs to ${^*}A$ for some $A \in V(S)$ is called \textit{internal}. A useful way to understand this concept is to think that internal objects are those that inherit properties from their standard counterparts by transfer. For instance, the internal subsets of ${^*}S$ are precisely the elements of ${^*}\mathcal{P}(S)$---a (reasonable) property satisfied by all elements of $\mathcal{P}(S)$ (that is, by all subsets of $S$) will thus transfer to all internal sets. As a consequence, the class of internal sets is closed under Boolean operations such as finite unions, finite intersections, etc. 

\begin{definition}
For a cardinal number $\kappa$, a nonstandard extension is called \textit{$\kappa$-saturated} if any collection of internal sets that has cardinality less than $\kappa$ and that has the finite intersection property has a non-empty intersection.
\end{definition}

 We will henceforth assume that the nonstandard extension we work with is sufficiently saturated (cf. \cite[Lemma 5.1.4, p. 294 and Exercise 5.1.21, p. 305]{Model_Theory}). 
 
An element in ${^*}\mathbb{R}$ will be called \textit{infinite} if it is larger than all elements in $\mathbb{R}$. Similarly an element in ${^*}\mathbb{R}$ will be called an \textit{infinitesimal} if its absolute value (that is, its image under the extension of the absolute value map) is smaller than all positive elements in $\mathbb{R}$. The set of non-infinite points in ${^*}\mathbb{R}$ is denoted by ${^*}\mathbb{R}_{\text{fin}}$. The next proposition shows that infinite (and infinitesimal) elements do exist in any sufficiently saturated nonstandard extension.

\begin{proposition}\label{existence of infinites}
${^*}\mathbb{R}$ contains infinite as well as infinitesimal elements.
\end{proposition}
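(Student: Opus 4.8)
The plan is to produce an infinite element of ${^*}\mathbb{R}$ directly from countable saturation, and then obtain an infinitesimal simply as its reciprocal.

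First I would consider, for each $n \in \mathbb{N}$, the set $I_n \defeq \{x \in {^*}\mathbb{R} : x > n\}$. Each $I_n$ is internal, being the nonstandard extension of the subset $\{x \in \mathbb{R} : x > n\}$ of $\mathbb{R} \in V(S)$. The family $\{I_n : n \in \mathbb{N}\}$ has the finite intersection property: given $n_1 < \cdots < n_m$ in $\mathbb{N}$, transfer of the true sentence ``there exists $x \in \mathbb{R}$ with $x > n_m$'' shows that $I_{n_1} \cap \cdots \cap I_{n_m} = I_{n_m} \neq \varnothing$. Since this family is countable and the extension is assumed sufficiently (in particular, $\aleph_1$-) saturated, the intersection $\bigcap_{n \in \mathbb{N}} I_n$ is nonempty. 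Any $\omega$ in this intersection satisfies $\omega > n$ for every $n \in \mathbb{N}$; since every real number is bounded above by some natural number, $\omega$ is larger than every element of $\mathbb{R}$, i.e., $\omega$ is infinite.

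Then I would pass to the reciprocal. Because $\omega > 1 > 0$, transfer of the field axioms produces a positive $\omega^{-1} \in {^*}\mathbb{R}$. For each integer $n \geq 1$, transfer of the sentence ``$\forall x \in \mathbb{R}\ \bigl( x > n \to 0 < 1/x < 1/n \bigr)$'' gives $0 < \omega^{-1} < 1/n$; combined with the Archimedean property of $\mathbb{R}$ (every positive real exceeds $1/n$ for some $n$), this shows $\omega^{-1}$ is a nonzero infinitesimal, which finishes the proof.

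I do not expect a genuine obstacle here; the only thing requiring care is the bookkeeping that the sets $I_n$ are genuinely internal and that the finite-intersection and reciprocal facts are bona fide instances of transfer, after which the conclusion is immediate from countable saturation. As an alternative that bypasses saturation entirely, one could instead apply (NS3) to the infinite set $\mathbb{N}$ to obtain some $\nu \in {^*}\mathbb{N} \setminus \mathbb{N}$ and then verify, using transfer of ``$x \leq m \to (x = 0 \vee \cdots \vee x = m)$'' for each fixed $m \in \mathbb{N}$, that $\nu$ cannot be bounded by any standard natural number and is therefore infinite; I would likely record this as a remark.
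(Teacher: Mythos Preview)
Your proof is correct and follows exactly the same approach as the paper: apply saturation to the countable family $\{x \in {^*}\mathbb{R} : x > n\}_{n \in \mathbb{N}}$ to extract an infinite element, then take its multiplicative inverse to obtain an infinitesimal. Your version is more carefully written (and the alternative route via (NS3) is a pleasant aside), but the underlying argument is identical to the paper's.
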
 
\begin{proof}
By saturation, the set $\cap_{n \in \mathbb{N}} \{x \in {^*}\mathbb{R}: x>n\}$ is empty. It is clear that any element in this set must be infinite. The multiplicative inverse of any infinite element is infinitesimal.
\end{proof}

The next result says that all legitimately nonstandard natural numbers (that is, those elements of ${^*}\mathbb{N}$ that are not elements of $\mathbb{N}$) are infinite.

\begin{proposition}\label{infinite natural}
Any $N \in {^*}\mathbb{N} \backslash \mathbb{N}$ is infinite. We express this by writing $N > \mathbb{N}$.
\end{proposition}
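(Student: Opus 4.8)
The plan is to exploit the discreteness of $\mathbb{N}$ below each fixed standard bound, applying transfer one bound at a time. Fix $n \in \mathbb{N}$ and let $F_n \defeq \{m \in \mathbb{N} : m \le n\}$, a finite set. Since every element of $F_n$ is a standard natural number, axiom (NS2) together with transfer of the sentence that lists the (finitely many) members of $F_n$ yields ${}^*F_n = F_n$. On the other hand, the statement ``$\forall x\,(x \in \mathbb{N} \wedge x \le n \rightarrow x \in F_n)$'' is a genuine first-order sentence --- it involves only finitely many symbols and no quantification over subsets of $S$ --- and it holds in $V(S)$ by the very definition of $F_n$; transferring it gives
$$\forall x\,\bigl(x \in {}^*\mathbb{N} \wedge x \le n \ \rightarrow\ x \in {}^*F_n = F_n\bigr).$$
In words: any element of ${}^*\mathbb{N}$ that is $\le n$ already belongs to $\mathbb{N}$.

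Next I would fix $N \in {}^*\mathbb{N} \setminus \mathbb{N}$ and show that $N > n$ for every $n \in \mathbb{N}$. If some $n \in \mathbb{N}$ satisfied $N \le n$, the displayed consequence of transfer would force $N \in F_n \subseteq \mathbb{N}$, contradicting the assumption $N \notin \mathbb{N}$. Hence $N > n$ for all $n \in \mathbb{N}$.

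Finally I would upgrade this to the assertion that $N$ is infinite, i.e.\ that $N > r$ for every $r \in \mathbb{R}$: given $r \in \mathbb{R}$, the Archimedean property provides $n \in \mathbb{N}$ with $n \ge r$, and then $N > n \ge r$. This is precisely the definition of $N$ being infinite, which justifies the notation $N > \mathbb{N}$.

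The step requiring the most care --- and essentially the only place where the argument could go wrong --- is insisting that transfer be applied to a \emph{separate} sentence for each concrete numeral $n$, and never to a single sentence quantifying over all $n \in \mathbb{N}$ simultaneously. Such a ``uniform'' statement would assert that every hypernatural lies in $F_n$ for some standard $n$, which is false; the legitimacy of the argument depends precisely on $n$ being a fixed numeral, so that ``$x \in F_n$'' unwinds into an honest finite formula. Note that no saturation is needed for this proposition.
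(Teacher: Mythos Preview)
Your proof is correct and follows a genuinely different route from the paper's. Both arguments ultimately reduce to showing $N > n$ for every standard $n \in \mathbb{N}$ and then invoke the Archimedean property of $\mathbb{R}$. The paper, however, proceeds via well-ordering: assuming $N$ is bounded by some standard natural, it takes the least $n_0 \in \mathbb{N}$ with $n_0 > N$ and uses transfer of the fact that consecutive naturals differ by $1$ to force $N = n_0 - 1 \in \mathbb{N}$ (with a short case analysis on whether $n_0 - N = 1$ or $n_0 - N \ge 2$), yielding a contradiction. You instead exploit that finite standard sets are fixed by the star map: for each fixed $n$, transfer of the finitary formula enumerating $F_n = \{1,\ldots,n\}$ gives ${}^*F_n = F_n$, so any hypernatural $\le n$ is already standard. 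Your approach is arguably cleaner --- it avoids the case split and any arithmetic in ${}^*\mathbb{N}$, relying only on (NS2) and transfer of finitary sentences --- while the paper's argument foregrounds the discreteness that ${}^*\mathbb{N}$ inherits by transfer. Your closing remark about applying transfer one numeral at a time is well taken and correctly isolates the only delicate point.
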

\begin{proof}
Let $N \in {^*}\mathbb{N} \backslash \mathbb{N}$. Suppose, if possible, that $N$ is finite. In particular, there exist elements of $\mathbb{N}$ that are larger than $N$. Thus the set $\{n \in \mathbb{N}: n > N\}$ is non-empty and hence has a smallest element, say $n_0$. By transfer of the fact that elements in $\mathbb{N}$ are at least one unit apart, we know that $n_0 - N \geq 1$. If $n_0 - N = 1$, then $N = n_0 - 1 \in \mathbb{N}$, a contradiction. Hence, we must have $n_0 - N \geq 2$ (by transfer of the fact that if the distance between two natural numbers is larger than one, then it is at least two). But then $n_0 - 1 \geq N + 1$ and $n_0 - 1 \in \mathbb{N}$, contradicting the minimality of $n_0$.    
\end{proof}

By transfer, all internal subsets of ${^*}\mathbb{R}$ have a least upper bound. Hence, in view of Proposition \ref{infinite natural}, the set $\mathbb{N}$ is not internal (as for any $N \in {^*}\mathbb{N} \backslash \mathbb{N}$, we have $N - 1 \in {^*}\mathbb{N} \backslash \mathbb{N}$ as well, and hence $\mathbb{N}$ does not have a least upper bound in ${^*}\mathbb{R}$).  We have seen several examples of internal sets and functions: ${^*}\mathbb{N}$, ${^*}\mathbb{R}$, ${^*}f$ (for any standard function $f$), etc. Unlike these examples, (NS3) guarantees the existence of internal objects that are not ${^*}\alpha$ for any $\alpha \in V(S)$. For instance, for any $N > \mathbb{N}$, the set $\{1, \ldots, N\}$ of the ``first $N$ nonstandard natural numbers'' is internal, yet it does not equal the nonstandard extension of any standard set. This set is rigorously defined as the initial segment of $N$ in ${^*}\mathbb{N}$. The fact that it is internal follows from the transfer of the following sentence:

\begin{align*}
    \forall n \in \mathbb{N} ~\exists! A \in \mathcal{P}(\mathbb{N}) ~[\forall x \in \mathbb{N} (x \in A \leftrightarrow x \leq n)].
\end{align*}

For a standard set $A$, let $\mathcal{P}_{\text{fin}}(A)$ denote the collection of finite subsets of $A$. There is a function $\#\colon \mathcal{P}_{\text{fin}}(A) \rightarrow \mathbb{N} \cup \{0\}$ that counts the number of elements in each finite subset. By transfer, we have a corresponding counting function ${^*}\#: {^*}\mathcal{P}_{\text{fin}}(A) \rightarrow {^*}\mathbb{N} \cup \{0\}$ (which we often still denote by $\#$ by an abuse of notation) that satisfies the same first order properties as the usual counting function (for example, it satisfies the inclusion-exclusion principle). The elements of ${^*}\mathcal{P}_{\text{fin}}(A)$ are called the \text{hyperfinite subsets} of ${^*}A$. Hyperfinite sets behave like finite sets even though they are not finite in the standard sense. For instance, an internal set $H$ is hyperfinite if and only if there is an $N \in {^*}\mathbb{N}$ and an internal bijection $f\colon H \rightarrow \{1, \ldots, N\}$. 

There is a ``sum function'' that takes any finite set of real numbers as an input and produces the sum of those real numbers. By transfer, we can thus abstractly make sense of ``hyperfinite sums'' (that is, the sum of hyperfinitely many nonstandard real numbers). For nonstandard real numbers $a_i$, this is the sense in which we interpret objects such as $\sum_{i = 1}^N a_i$ where $N \in {^*}\mathbb{N}$ (or in general, $\sum_{i \in H} a_i$, where $H$ is a hyperfinite set).

The next result says that one can think of a finite nonstandard real number $z$ as having a real part, and an infinitesimal part (in fact, this real part is just $\sup\{y \in \mathbb{R}: y \leq z\}$). See \cite[Theorem 2.10, p. 55]{Cutland_NATO} for a proof.
\begin{proposition}\label{standard part map}
For all $z \in {^*}\mathbb{R}_{\text{fin}}$, there is a unique $x \in \mathbb{R}$ (called the \textit{standard part} of $z$) such that $(z - x)$ is infinitesimal. We write $\st(z) = x$ or $z \approx x$. 
\end{proposition}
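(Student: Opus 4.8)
The plan is to construct the standard part explicitly as a supremum and then verify the defining property and uniqueness separately.

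First I would form the set $A \defeq \{y \in \mathbb{R} : y \leq z\}$, where $\leq$ denotes the restriction to $\mathbb{R}$ of the transferred order on ${^*}\mathbb{R}$. Because $z$ is non-infinite, $A$ is both nonempty (some real number lies below $z$) and bounded above (some real number lies above $z$); by the completeness of $\mathbb{R}$ --- a standard, external fact, not an instance of transfer --- the real number $x \defeq \sup A$ exists. This is the candidate for $\st(z)$, matching the parenthetical description in the statement.

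Next I would show that $z - x$ is infinitesimal. Suppose not; then $\abs{z - x} > \epsilon$ for some $\epsilon \in \mathbb{R}_{>0}$. If $z - x > \epsilon$, then $x + \epsilon \leq z$, so $x + \epsilon \in A$, contradicting that $x$ is an upper bound of $A$. If instead $z - x < -\epsilon$, then every $y \in A$ satisfies $y \leq z < x - \epsilon$, so $x - \epsilon$ is an upper bound of $A$ that is strictly smaller than $x$, contradicting that $x$ is the \emph{least} upper bound. Either way we reach a contradiction, so $z - x$ is infinitesimal, which gives existence.

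Finally, for uniqueness, if $x_1, x_2 \in \mathbb{R}$ both satisfy the conclusion, then $x_1 - x_2 = (z - x_2) - (z - x_1)$ is a difference of two infinitesimals and hence infinitesimal; but $x_1 - x_2 \in \mathbb{R}$, and the only infinitesimal real number is $0$, so $x_1 = x_2$. I do not expect a serious obstacle here; the only points requiring care are invoking completeness of $\mathbb{R}$ directly (rather than attempting to transfer it) and keeping the case analysis straight, namely tracking whether a putative witness lies \emph{in} $A$ or merely \emph{bounds} $A$.
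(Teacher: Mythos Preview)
Your proposal is correct and follows exactly the construction the paper indicates parenthetically, namely $x = \sup\{y \in \mathbb{R} : y \leq z\}$; the paper itself does not spell out the argument but refers to \cite[Theorem 2.10, p.~55]{Cutland_NATO}, and what you have written is precisely the standard proof one finds there. The only minor point worth tightening is the appeal to ``$z$ non-infinite'' for the lower bound: the paper's phrasing of \emph{infinite} is one-sided, so to get $A$ nonempty you are implicitly using that ${^*}\mathbb{R}_{\text{fin}}$ consists of elements bounded in absolute value by some real, which is the intended meaning.
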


Note that, more generally, one can define the notion of standard parts for elements in the nonstandard extension of any Hausdorff space. In general, we will need a point to be \textit{nearstandard}, instead of finite, for it to have a standard part.

\begin{definition}
For a topological space $T$ and a point $z \in {^*}T$, we say that $z$ is nearstandard to $x$ if $z \in {^*}O$ for any open neighborhood $O$ of $x$.  
\end{definition}

If $T$ is a Hausdorff space, then a point $z \in {^*}T$ can be nearstandard to at most one point $x \in T$. In such a case, we write $x = \st(z)$ (also, $z \in \st^{-1}(x)$). Using this notation, we have the following useful characterization of continuity (see, for example, \cite[Proposition 1.3.3, p. 27]{Albeverio} for the one-dimensional case, with the higher dimensional case following a similar argument):

\begin{proposition}
Let $S$ and $T$ be Hausdorff spaces, and let $f\colon S \rightarrow T$ be a function. Then $f$ is continuous at $x \in S$ if and only if ${^*}f(\st^{-1}(x)) \subseteq \st^{-1}(f(x))$.
\end{proposition}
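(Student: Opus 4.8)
The plan is to prove the two implications separately. For necessity I would argue directly by transfer; for sufficiency I would argue by contradiction, using saturation to manufacture a nearstandard point whose image is not nearstandard.

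\emph{Necessity.} Assume $f$ is continuous at $x$ and let $z$ be nearstandard to $x$. To place ${^*}f(z)$ in $\st^{-1}(f(x))$ I would fix an arbitrary open neighborhood $U$ of $f(x)$ and produce, by continuity, an open neighborhood $O$ of $x$ with $f[O]\subseteq U$. Transferring this inclusion --- and using the standard fact that ${^*}$ commutes with taking images, i.e. ${^*}(f[O]) = {^*}f[{^*}O]$ --- gives ${^*}f[{^*}O]\subseteq {^*}U$. Since $z\in {^*}O$ (because $z$ is nearstandard to $x$), this yields ${^*}f(z)\in {^*}U$, and as $U$ was arbitrary, ${^*}f(z)$ is nearstandard to $f(x)$.

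\emph{Sufficiency.} Here I would assume $f$ is \emph{not} continuous at $x$ and build a point violating the inclusion ${^*}f(\st^{-1}(x))\subseteq\st^{-1}(f(x))$. Failure of continuity gives an open neighborhood $U$ of $f(x)$ such that every open neighborhood $O$ of $x$ contains a point $y_O$ with $f(y_O)\notin U$. For each such $O$ set
\[
    A_O \defeq {^*}O \cap ({^*}f)^{-1}\!\left({^*}T \setminus {^*}U\right),
\]
an internal set which is nonempty because $y_O\in A_O$ (using ${^*}(T\setminus U) = {^*}T\setminus{^*}U$, so that $f(y_O)\in{^*}T\setminus{^*}U$). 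The family $\{A_O\}$ has the finite intersection property, since for finitely many $O_1,\dots,O_n$ the intersection $O = O_1\cap\cdots\cap O_n$ is again an open neighborhood of $x$ and $A_O\subseteq A_{O_1}\cap\cdots\cap A_{O_n}$. Saturation then produces $z\in\bigcap_O A_O$: such a $z$ lies in ${^*}O$ for every open neighborhood $O$ of $x$, hence $z\in\st^{-1}(x)$, yet ${^*}f(z)\in{^*}T\setminus{^*}U$ witnesses that ${^*}f(z)$ is not nearstandard to $f(x)$, contradicting the hypothesis.

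The routine ingredients are the transfer identities (${^*}$ commuting with images and preimages, and ${^*}(T\setminus U) = {^*}T\setminus{^*}U$). The one genuine obstacle is the saturation step in the sufficiency direction: it requires the extension to be $\kappa$-saturated for some $\kappa$ exceeding the size of a neighborhood basis at $x$, which is exactly why the paper assumes a ``sufficiently saturated'' nonstandard extension; if $S$ were first countable one could replace this with an explicit sequential argument, but invoking saturation lets the proof go through for all Hausdorff $S$ at once.
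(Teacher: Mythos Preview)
Your proof is correct. Both directions are argued in the standard way: necessity by transferring the inclusion $f[O]\subseteq U$, and sufficiency by saturating the family of internal sets $A_O = {^*}O \cap ({^*}f)^{-1}({^*}T\setminus{^*}U)$ to produce a point of $\st^{-1}(x)$ whose image escapes ${^*}U$. Your remark about the saturation requirement is also accurate and matches the paper's standing assumption of a sufficiently saturated extension.

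As for comparison: the paper does not actually prove this proposition. It is stated in the appendix with a reference to Albeverio et al.\ for the one-dimensional case, together with the comment that the general case follows similarly. So you have supplied a full argument where the paper gives only a citation; there is nothing further to compare.
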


We shall also need the following characterization of compact spaces (see \cite[Proposition 2.1.6]{Albeverio}).
\begin{proposition}
A a topological space $T$ is compact if and only all points in ${^*}T$ are nearstandard.
\end{proposition}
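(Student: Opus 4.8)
The plan is to prove each implication by contraposition. For the direction ``$T$ compact $\Rightarrow$ every point of ${^*}T$ is nearstandard,'' I would fix $z \in {^*}T$ and suppose it is \emph{not} nearstandard; then for each $x \in T$ there is an open neighbourhood $O_x \ni x$ with $z \notin {^*}O_x$. The family $\{O_x : x \in T\}$ covers $T$, so compactness produces $x_1, \ldots, x_n$ with $T = O_{x_1} \cup \ldots \cup O_{x_n}$. This is a first-order statement about a \emph{fixed} finite union, so transfer gives ${^*}T = {^*}O_{x_1} \cup \ldots \cup {^*}O_{x_n}$, whence $z \in {^*}O_{x_j}$ for some $j$ --- contradicting the choice of $O_{x_j}$.

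For the converse, ``every point of ${^*}T$ nearstandard $\Rightarrow$ $T$ compact,'' I would again argue by contradiction, starting from an open cover $\{O_i : i \in I\}$ of $T$ with no finite subcover. Each ${^*}T \setminus {^*}O_i$ is internal, and this family has the finite intersection property: for finite $F \subseteq I$,
\[
\bigcap_{i \in F}\bigl({^*}T \setminus {^*}O_i\bigr) = {^*}\!\Bigl(T \setminus \bigcup_{i \in F} O_i\Bigr),
\]
which is nonempty because $F$ is not a subcover (and the nonstandard extension of a nonempty set is nonempty). By sufficient saturation I then obtain a point $z$ lying in every ${^*}T \setminus {^*}O_i$, i.e.\ $z \notin {^*}O_i$ for all $i \in I$. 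By hypothesis $z$ is nearstandard to some $x \in T$; since the $O_i$ cover $T$, we have $x \in O_j$ for some $j$, and $O_j$ is then an open neighbourhood of $x$, forcing $z \in {^*}O_j$ --- the desired contradiction.

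The one step needing care is the saturation argument: the index set $I$ can be large, so one must work in an extension that is $\kappa$-saturated for some $\kappa$ exceeding $|I|$ (this is available for the spaces relevant here, such as $[0,1]$), and one must phrase the finite intersection property so that transfer applies cleanly --- which it does, since for each fixed finite $F$ the assertion ``$T \setminus \bigcup_{i \in F} O_i \neq \emptyset$'' is first-order once $F$ is fixed. Everything else is a mechanical translation between the definitions of open cover, finite subcover, and nearstandardness.
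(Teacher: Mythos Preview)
Your argument is correct and is the standard textbook proof of this characterization of compactness. Note, however, that the paper does not supply its own proof of this proposition: it merely cites \cite[Proposition 2.1.6]{Albeverio}, so there is nothing to compare against beyond observing that your proof is essentially the one found in that reference. Your remark about saturation is also apt and matches the paper's blanket assumption that the extension is ``sufficiently saturated''; for the forward direction only transfer is needed, while the converse genuinely requires saturation at a cardinality exceeding that of the open cover (or of a basis for the topology), which is unproblematic for the compact Hausdorff spaces---in particular $[0,1]$---actually used in the paper.
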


The following consequence of saturation will be useful in the sequel (see \cite[Lemma 3.1.1, p. 64]{Albeverio} for a proof). 
\begin{proposition}\label{countable union}
A countable union of disjoint internal sets is internal if and only if all but finitely many of them are empty.
\end{proposition}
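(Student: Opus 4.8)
The plan is to prove the two implications separately; only the forward one has content. For the easy direction, suppose $A_n = \emptyset$ for all $n > m$. Then $\bigcup_{n \in \mathbb{N}} A_n = A_1 \cup \dots \cup A_m$ is a finite union of internal sets, and hence internal, since the class of internal sets is closed under finite unions (as recorded earlier in this appendix).

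For the converse I would argue by contradiction using countable ($\aleph_1$-)saturation, which is available since we assume the extension is sufficiently saturated. Let $(A_n)_{n \in \mathbb{N}}$ be pairwise disjoint internal sets, assume infinitely many of them are nonempty, and suppose toward a contradiction that $B \defeq \bigcup_{n \in \mathbb{N}} A_n$ is internal. Enumerating the indices $n$ with $A_n \neq \emptyset$ as a subsequence and relabelling — which changes neither the pairwise disjointness nor the union $B$, since the discarded sets are empty — we may assume $A_n \neq \emptyset$ for every $n$. For each $n$ put $C_n \defeq B \setminus (A_1 \cup \dots \cup A_n)$. Each $A_1 \cup \dots \cup A_n$ is an internal subset of the internal set $B$, so each $C_n$ is internal; the $C_n$ form a decreasing chain $C_1 \supseteq C_2 \supseteq \dots$; and since the $A_i$ are pairwise disjoint, $A_{n+1} \subseteq C_n$, so every $C_n$ is nonempty. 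A decreasing chain of nonempty sets has the finite intersection property, so by $\aleph_1$-saturation $\bigcap_{n \in \mathbb{N}} C_n \neq \emptyset$. But $\bigcap_{n \in \mathbb{N}} C_n = B \setminus \bigcup_{n \in \mathbb{N}} A_n = B \setminus B = \emptyset$, the desired contradiction.

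I do not expect a genuine obstacle: this is a textbook consequence of countable saturation and the argument above is essentially complete. The only points deserving an explicit sentence are that the relative complements $C_n$ are internal — which follows from internal sets forming a Boolean algebra under unions, intersections, and complements taken inside any fixed internal superset — and the identity $\bigcap_n C_n = B \setminus \bigcup_n A_n$, which is immediate from the definitions. As a sanity check, $\mathbb{N} = \bigcup_{n \in \mathbb{N}} \{n\}$ is a countable union of pairwise disjoint nonempty internal singletons, so the proposition reconfirms that $\mathbb{N}$ is not internal.
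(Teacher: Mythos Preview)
Your proof is correct. The paper does not actually supply its own argument for this proposition but defers to \cite[Lemma 3.1.1, p.~64]{Albeverio}; the construction you give---forming the decreasing internal tails $C_n = B \setminus (A_1 \cup \dots \cup A_n)$ and applying $\aleph_1$-saturation to force a nonempty intersection that must in fact equal $B \setminus B = \emptyset$---is precisely the standard proof found there.
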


We now describe the concept of Loeb measures. Let $\mathfrak{X}$ be an internal set in a nonstandard universe ${^*}V(S)$. Let $\mathcal{A}$ be an \textit{internal algebra} on $\mathfrak{X}$, i.e., an internal set consisting of (internal) subsets of $\mathfrak{X}$ that is closed under complements and finite unions. Given an internal probability measure $\nu$ (that is, the internal map $\nu\colon \mathcal{A} \rightarrow {^*}\mathbb{R}_{\geq 0}$  satisfies $\nu(\mathfrak{X}) = 1$, and $\nu(A \cup B) = \nu(A) + \nu(B)$ whenever $A \cap B = \emptyset$), the map $\st(\nu) \colon \mathcal{A} \rightarrow {\mathbb{R}_{\geq 0}}$ is an ordinary finitely additive probability measure. By Proposition \ref{countable union}, it follows that $\st(\nu)$ satisfies the premises of Carath\'eodory Extension Theorem. By that theorem, it extends to a unique probability measure on $\sigma(\mathcal{A})$ (the smallest sigma algebra containing $\mathcal{A}$), whose completion is called the \textbf{Loeb measure} of $\nu$. The corresponding complete measure space $(\mathfrak{X}, L(\mathcal{A}), L\nu)$ is called the \textbf{Loeb space} of $(\mathfrak{X}, \mathcal{A}, \nu)$. Note that this construction could have been done with any finite internal measure $\nu$.

We will use the following simplification of \cite[Theorem 5.1, p. 105]{Ross_NATO} extensively:
\begin{proposition}\label{Loeb measurability}
Let $(\mathfrak{X}, L(\mathcal{A}), L\nu)$ be the Loeb probability space of $(\mathfrak{X}, \mathcal{A}, \nu)$. Suppose $F\colon \mathfrak{X} \rightarrow {^*}\mathbb{R}$ is an internal function that is measurable in the sense that $F^{-1}(B) \in \mathcal{A}$ for all $B \in {^*}\mathcal{B}(\mathbb{R})$ (where $\mathcal{B}(\mathbb{R})$ is the Borel $\sigma$-algebra on $\mathbb{R}$). If $F(x) \in {^*}\mathbb{R}_{\text{fin}}$ for $L\nu$-almost all $x \in \mathfrak{X}$, then $\st(F)$ is Loeb measurable (i.e., measurable as a map from $(\mathfrak{X}, L(\mathcal{A}))$ to $(\mathbb{R}, \mathcal{B}(\mathbb{R}))$).
\end{proposition}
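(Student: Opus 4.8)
The plan is to verify Loeb measurability of $\st(F)$ directly from the definition, by showing that $\st(F)^{-1}(B) \in L(\mathcal{A})$ for $B$ ranging over a generating family of Borel sets, namely the half-lines $(-\infty, r)$ with $r \in \mathbb{R}$. First I would isolate the exceptional set
\begin{align*}
\mathfrak{N} := \{x \in \mathfrak{X} : F(x) \notin {^*}\mathbb{R}_{\text{fin}}\} = \bigcap_{n \in \mathbb{N}} F^{-1}\big({^*}(\{y \in \mathbb{R} : |y| > n\})\big).
\end{align*}
Each set on the right lies in $\mathcal{A}$ by the measurability hypothesis on $F$ (since $\{y : |y| > n\}$ is a Borel subset of $\mathbb{R}$), so $\mathfrak{N}$ is a countable intersection of members of $\mathcal{A}$ and hence belongs to $\sigma(\mathcal{A}) \subseteq L(\mathcal{A})$; by hypothesis it is $L\nu$-null. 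On $\mathfrak{X} \setminus \mathfrak{N}$ the function $\st(F)$ is well defined by Proposition \ref{standard part map}, and on $\mathfrak{N}$ we extend it arbitrarily, say by the constant $0$ --- a choice immaterial to measurability because $L\nu$ is complete.

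The crux is the elementary equivalence: for $x \in \mathfrak{X} \setminus \mathfrak{N}$ and $r \in \mathbb{R}$, one has $\st(F(x)) < r$ if and only if $F(x) < r - 1/n$ for some $n \in \mathbb{N}$. Indeed, if $F(x) < r - 1/n$ then (as $F(x)$ is finite) $\st(F(x)) \leq r - 1/n < r$; conversely, setting $s := \st(F(x)) < r$ and choosing $n$ with $1/n < (r - s)/2$, the fact that $F(x) - s$ is infinitesimal gives $F(x) < s + (r-s)/2 < r - 1/n$. Therefore
\begin{align*}
\{x \in \mathfrak{X} \setminus \mathfrak{N} : \st(F(x)) < r\} = (\mathfrak{X} \setminus \mathfrak{N}) \cap \bigcup_{n \in \mathbb{N}} F^{-1}\big({^*}(-\infty, r - 1/n)\big),
\end{align*}
while on $\mathfrak{N}$ the set $\{x : \st(F(x)) < r\}$ equals $\mathfrak{N}$ if $r > 0$ and is empty otherwise.

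Each $F^{-1}\big({^*}(-\infty, r - 1/n)\big)$ lies in $\mathcal{A}$ since $(-\infty, r - 1/n) \in \mathcal{B}(\mathbb{R})$ and $F$ is measurable in the stated sense; the countable union therefore lies in the $\sigma$-algebra $L(\mathcal{A})$, and intersecting with $\mathfrak{X} \setminus \mathfrak{N} \in L(\mathcal{A})$ and possibly adjoining the $L\nu$-null set $\mathfrak{N} \in L(\mathcal{A})$ keeps us inside $L(\mathcal{A})$. Hence $\st(F)^{-1}((-\infty, r)) \in L(\mathcal{A})$ for every $r \in \mathbb{R}$; since these half-lines generate $\mathcal{B}(\mathbb{R})$ and the collection of $B$ with $\st(F)^{-1}(B) \in L(\mathcal{A})$ is a $\sigma$-algebra, it follows that $\st(F)$ is Loeb measurable. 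I do not expect a serious obstacle: the only point requiring care is the bookkeeping in the displayed identity --- correctly writing the strict inequality $\st(F(x)) < r$ as a countable union of the internal conditions $F(x) < r - 1/n$ and quarantining the infinite-valued points in $\mathfrak{N}$ --- and no saturation or deeper nonstandard input is needed beyond the Loeb construction already recalled above.
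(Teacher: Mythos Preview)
Your argument is correct. The paper, however, does not actually supply a proof of this proposition: it is stated as a ``simplification of \cite[Theorem 5.1, p.~105]{Ross_NATO}'' and quoted without demonstration. So there is no proof in the paper to compare against; you have written out what the paper merely cites.

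As for the content of your argument, the key identity
\[
\{x \in \mathfrak{X}\setminus\mathfrak{N} : \st(F(x)) < r\} \;=\; (\mathfrak{X}\setminus\mathfrak{N}) \cap \bigcup_{n\in\mathbb{N}} F^{-1}\bigl({^*}(-\infty, r - 1/n)\bigr)
\]
is exactly the right device, and your handling of the exceptional set $\mathfrak{N}$ via completeness of $L\nu$ is the standard way to dispose of the points where $F$ is infinite. The proof is self-contained and uses nothing beyond the Loeb construction and elementary properties of standard parts, which is indeed all that is required.
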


For any probability measure $\nu$, there is an $\textit{integral operator}$ that takes certain functions (those in the space $L^1(\nu)$ of integrable real-valued functions on the underlying sample space of $\nu$) to their integrals with respect to $\nu$. By transfer, if $(\mathfrak{X}, \mathcal{A}, \nu)$ is an internal probability space, then we also have the associated space ${^*}L^1(\mathfrak{X}, \nu)$ of ${^*}$-integrable functions. For any  ${^*}$-integrable $F\colon \mathfrak{X} \rightarrow {^*}\mathbb{R}$, one then has $\starint_\mathfrak{X} F d\nu \in {^*}\mathbb{R}$, which we call the ${^*}$-\textit{integral} of $F$ over $(\mathfrak{X}, \mathcal{A}, \nu)$. 

The ${^*}$-integral on ${^*}L^1(\mathfrak{X})$ inherits many properties (an important one being linearity) from the ordinary integral by transfer. If $F$ is finite almost surely with respect to the corresponding Loeb measure, then $\st(F)$ is Loeb measurable by Proposition \ref{Loeb measurability}. In that case, it is interesting to study the relation between the ${^*}$-integral of $F$ and the Loeb integral of $\st(F)$. The following result covers this for a useful class of functions (see \cite[Theorem 6.2, p.110]{Ross_NATO} for a proof):
\begin{theorem}\label{S-integrable TFAE}
Suppose $(\mathfrak{X}, \mathcal{A}, \nu)$ is an internal probability space and $F \in {^*}L^1(\mathfrak{X}, \nu)$ is such that $L\nu(F \in {^*}\mathbb{R}_{\text{fin}}) = 1$. Then the following are equivalent:
\begin{enumerate}[(1)]
    \item\label{S1} $\starint_\mathfrak{X} \abs{F} d\nu \in {^*}\mathbb{R}_{\text{fin}}$, and 
    $$\st\left(\starint_\mathfrak{X} \abs{F} d\nu \right) = \lim_{m \rightarrow \infty} \st\left(\starint_\mathfrak{X}  \abs{F}\mathbbm{1}_{\{\abs{F} \leq m\}} d\nu \right).$$
    
    \item\label{S2} For every $M > \mathbb{N}$, we have $\st\left(\starint_\mathfrak{X} \abs{F} \mathbbm{1}_{\{\abs{F} > M\}}d \nu \right) = 0$.
    \item\label{S3} $\starint_\mathfrak{X} \abs{F} d\nu \in {^*}\mathbb{R}_{\text{fin}}$; and for any $A \in \mathcal{A}$ we have: 
    $$\nu(A) \approx 0 \Rightarrow \starint_\mathfrak{X} \abs{F} \mathbbm{1}_A d\nu \approx 0.$$
    \item\label{S4} $\st(F)$ is Loeb integrable, and $\st\left(\starint_\mathfrak{X} \abs{F} d\nu \right) = \int_\mathfrak{X} \abs{\st(F)} dL\nu$.
\end{enumerate}
\end{theorem}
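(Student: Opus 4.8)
The plan is to reduce at once to the case $F \geq 0$, replacing $F$ by $\abs{F}$ and using that $\st(\abs{F}) = \abs{\st(F)}$ off the $L\nu$-null set on which $F$ is infinite, and to put $g \defeq \st(F)$, which is $L\nu$-almost everywhere defined and Loeb measurable by Proposition \ref{Loeb measurability}. I will treat the bounded truncations $F \wedge m$ ($m \in \mathbb{N}$) as the primary objects; the truncations $F\mathbbm{1}_{\{F \leq m\}}$ appearing in $(1)$ and $(2)$ are interchangeable with them for our purposes, since $0 \leq \starint(F\wedge m)\, d\nu - \starint F\mathbbm{1}_{\{F \leq m\}}\, d\nu = m\,\nu(\{F>m\})$ and the transferred Markov inequality $\nu(\{F>m\}) \leq \tfrac{1}{m}\starint F\, d\nu$ makes this difference infinitesimal for every $m > \mathbb{N}$ once $\starint F\, d\nu$ is known to be finite.

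The one genuinely analytic ingredient is the classical fact that bounded internal measurable functions are $S$-integrable: if $0 \leq G \leq c$ for some $c \in \mathbb{N}$, then $\st\bigl(\starint G\, d\nu\bigr) = \int \st(G)\, dL\nu$. I would prove this by chopping $[0,c]$ into finitely many short intervals, which by internality expresses $G$ as an internal simple function up to a uniform infinitesimal error; for internal simple functions the identity is immediate from finite additivity of $\nu$ and the fact that $\st(\nu)$ agrees with the Loeb measure on $\mathcal{A}$, and refining the partition completes the bounded case. Applying this with $G = F\wedge m$ gives $\st\bigl(\starint(F\wedge m)\, d\nu\bigr) = \int(g\wedge m)\, dL\nu$ for each $m$, whereupon the monotone convergence theorem for the Loeb integral yields the \emph{hinge identity} $\lim_{m\to\infty}\st\bigl(\starint(F\wedge m)\, d\nu\bigr) = \int g\, dL\nu \in [0,\infty]$.

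I would then establish $(2)\Rightarrow(1)\Rightarrow(4)\Rightarrow(2)$ and, separately, $(2)\Leftrightarrow(3)$. For $(2)\Rightarrow(1)$: the internal set $\bigl\{M \in {^*}\mathbb{N} : \starint F\mathbbm{1}_{\{F>M\}}\, d\nu < 1\bigr\}$ contains every infinite $M$ by $(2)$, so by underspill (an internal subset of ${^*}\mathbb{N}$ containing all infinite elements contains a finite one) it contains some $m_0 \in \mathbb{N}$, giving $\starint F\, d\nu \leq m_0 + 1$; moreover the reals $\st\bigl(\starint F\mathbbm{1}_{\{F>m\}}\, d\nu\bigr)$ decrease and cannot have a strictly positive limit, as otherwise overspill would produce an infinite $M$ contradicting $(2)$, and since $\st\bigl(\starint F\, d\nu\bigr) = \st\bigl(\starint F\mathbbm{1}_{\{F\leq m\}}\, d\nu\bigr) + \st\bigl(\starint F\mathbbm{1}_{\{F>m\}}\, d\nu\bigr)$, this is exactly $(1)$. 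For $(1)\Rightarrow(4)$: combining $(1)$, the hinge identity, and the sandwich $\starint F\mathbbm{1}_{\{F\leq m\}}\, d\nu \leq \starint(F\wedge m)\, d\nu \leq \starint F\, d\nu$ forces $\int(g\wedge m)\, dL\nu \to \st\bigl(\starint F\, d\nu\bigr)$, so $g$ is Loeb integrable with the asserted integral. For $(4)\Rightarrow(2)$: for infinite $M$ we have $\nu(\{F>M\}) \approx 0$ by Markov (since $\starint F\, d\nu$ is finite), so the bound $\starint F\mathbbm{1}_{\{F\leq M\}}\, d\nu \geq \starint(F\wedge m')\, d\nu - m'\nu(\{F>M\})$ gives $\st\bigl(\starint F\mathbbm{1}_{\{F\leq M\}}\, d\nu\bigr) \geq \int(g\wedge m')\, dL\nu$ for every finite $m'$, and letting $m'\to\infty$ forces $\st\bigl(\starint F\mathbbm{1}_{\{F\leq M\}}\, d\nu\bigr) = \int g\, dL\nu = \st\bigl(\starint F\, d\nu\bigr)$, which is $(2)$. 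Lastly $(3)\Rightarrow(2)$ is immediate from Markov, and $(2)\Rightarrow(3)$ follows by splitting $\starint F\mathbbm{1}_A\, d\nu \leq m\,\nu(A) + \starint F\mathbbm{1}_{\{F>m\}}\, d\nu$ with $m$ chosen infinite but small enough that $m\,\nu(A) \approx 0$.

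I expect the main obstacle to be the bounded-function lemma of the second paragraph together with its clean interlocking with the Loeb monotone convergence theorem to produce the hinge identity; granting that, the remaining implications are routine manipulations with over/underspill and the transferred Markov inequality. The only other points requiring a little care are the interchangeability of the two truncation families noted in the first paragraph and the choice of a suitably small infinite index in $(2)\Rightarrow(3)$.
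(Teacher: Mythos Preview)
The paper does not give its own proof of this theorem; it simply refers the reader to \cite[Theorem 6.2, p.\ 110]{Ross_NATO}. So there is no in-paper argument to compare against, and your proposal stands on its own as a (standard) proof of the $S$-integrability equivalences.

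Your overall architecture is correct and is essentially the textbook one: a bounded-function lemma feeding into a monotone-convergence ``hinge identity'', then a cycle of implications driven by over/underspill and Markov. The implications $(2)\Rightarrow(1)\Rightarrow(4)\Rightarrow(2)$ and $(2)\Leftrightarrow(3)$ as you wrote them all go through.

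One genuine slip to fix: your first-paragraph claim that Markov plus finiteness of $\starint F\,d\nu$ makes $m\,\nu(\{F>m\})$ infinitesimal for infinite $m$ is false as stated---Markov only gives $m\,\nu(\{F>m\}) \leq \starint F\,d\nu$, which is finite, not infinitesimal. The correct bound is $m\,\nu(\{F>m\}) \leq \starint F\,\mathbbm{1}_{\{F>m\}}\,d\nu$, which \emph{is} infinitesimal once $(2)$ is in hand. Fortunately you never actually use this interchangeability remark: each implication works directly with the indicator truncations, and $(1)\Rightarrow(4)$ uses only the sandwich $\starint F\mathbbm{1}_{\{F\leq m\}}\,d\nu \leq \starint(F\wedge m)\,d\nu \leq \starint F\,d\nu$. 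So the error is cosmetic; just delete or correct the remark. A second, smaller point: in the bounded-function lemma, a finite partition of $[0,c]$ gives error bounded by any prescribed real $\epsilon>0$, not an infinitesimal error; phrase it as an $\epsilon$-argument (or use a hyperfinite partition and argue carefully about the resulting hyperfinite sum).
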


A function satisfying the conditions in Theorem $\ref{S-integrable TFAE}$ is called $\mathit{S}$\textit{-integrable} on $(\mathfrak{X}, \mathcal{A}, \nu)$. Using this concept, we obtain the main result that was needed in this paper. The following result is applicable to more general situations (refer to the settings in Sections 3.4 and 3.5 of Albeverio et al. \cite{Albeverio}). However, we restrict to compact Hausdorff spaces and real-valued functions on them for convenience. 
\begin{theorem}\label{appendix theorem}
Let $S$ be a compact Hausdorff space. Suppose ${^*}\mathcal{B}(S)$ is the internal algebra of ${^*}$-Borel subsets of $S$. Let $\nu$ be an internal (finitely additive) probability measure on $({^*}S, {^*}\mathcal{B}(S))$. Let $L\nu$ be the associated Loeb measure. Define a map $\mu \colon \mathcal{B}(S) \rightarrow [0,1]$ by:

\begin{align}
    \mu(B) \defeq L\nu(\st^{-1}(B)) \text{ for all } B \in \mathcal{B}(S).
\end{align}

Then, we have:
\begin{enumerate}[(i)]
    \item\label{Radon} $\mu$ is a Radon probability measure.
    \item\label{integrals of continuous functions} For any nonnegative continuous function $f\colon S \rightarrow \mathbb{R}_{\geq 0}$, we have:
    \begin{align}\label{continuous integral equation}
        \starint_{{^*}S} {^*}f d\nu \approx \int_S f d\mu.
    \end{align}
    \end{enumerate}
\end{theorem}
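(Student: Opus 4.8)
The plan is to recognize $\mu$ as the pushforward of the Loeb measure $L\nu$ under the standard part map $\st\colon {^*}S\rightarrow S$, and then to read off (i) from general Loeb-measure theory together with the regularity of compact Hausdorff spaces, and (ii) from the $S$-integrability criterion of Theorem \ref{S-integrable TFAE} together with the nonstandard characterization of continuity. Since $S$ is compact Hausdorff, every point of ${^*}S$ is nearstandard, so $\st$ is a well-defined surjection ${^*}S\rightarrow S$. I would first record the basic topological facts: for open $U\subseteq S$ one has $\st^{-1}(U)\subseteq {^*}U\subseteq \st^{-1}(\overline{U})$, and for closed $C\subseteq S$ one has ${^*}C\subseteq\st^{-1}(C)$ and, using regularity in the form $C=\bigcap\{\overline{U}:U\supseteq C\ \text{open}\}$, the identity $\st^{-1}(C)=\bigcap\{{^*}U:U\supseteq C\ \text{open}\}$.

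The key step is a measurability lemma for closed sets: for $C\subseteq S$ closed, $\st^{-1}(C)$ is Loeb measurable and $L\nu(\st^{-1}(C))=r$, where $r\defeq\inf\{\st(\nu({^*}U)):U\supseteq C\ \text{open}\}$. The inclusion $\st^{-1}(C)\subseteq {^*}U$ bounds the Loeb outer measure of $\st^{-1}(C)$ by $r$ at once. For the matching bound on the Loeb inner measure I would use saturation: fix a real $\epsilon>0$; the internal sets $\{B\in {^*}\mathcal{B}(S):B\subseteq {^*}U\}$ as $U$ ranges over open supersets of $C$, together with $\{B\in {^*}\mathcal{B}(S):\nu(B)\geq r-\epsilon\}$, have the finite intersection property, because a finite intersection $U$ of open neighborhoods of $C$ is again one and satisfies $\st(\nu({^*}U))\geq r$. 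Any $B$ in the total intersection is an internal subset of $\bigcap_U {^*}U=\st^{-1}(C)$ with $L\nu(B)\geq r-\epsilon$, so the inner measure is at least $r$. Since the collection $\{A\subseteq S:\st^{-1}(A)\in L({^*}\mathcal{B}(S))\}$ is a $\sigma$-algebra containing every closed set, it contains $\mathcal{B}(S)$; hence $\mu$ is well defined, and it is countably additive because $\st^{-1}$ takes disjoint unions to disjoint unions, with $\mu(S)=L\nu({^*}S)=1$. So $\mu$ is a Borel probability measure.

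To finish (i) I would prove regularity. From $\st^{-1}(U)\subseteq {^*}U$ we get $\mu(U)\leq L\nu({^*}U)$ for every open $U$, hence for closed $C$, $\mu(C)=r=\inf_{U\supseteq C}L\nu({^*}U)\geq\inf_{U\supseteq C}\mu(U)$, while monotonicity gives the reverse inequality; thus $\mu(C)=\inf\{\mu(U):U\supseteq C\ \text{open}\}$. The standard $\sigma$-algebra bootstrap then promotes this to outer regularity for all Borel sets, and since $S$ is compact, outer regularity of the finite measure $\mu$ forces inner regularity by closed (hence compact) subsets; so $\mu$ is Radon. For (ii), let $f\colon S\rightarrow\mathbb{R}_{\geq 0}$ be continuous; compactness gives $0\leq f\leq c$ for a real $c$, so ${^*}f$ is internal with $0\leq {^*}f\leq c$ everywhere, hence ${^*}$-integrable, and for any $M>\mathbb{N}$ the set $\{{^*}f>M\}$ is empty, so ${^*}f$ is $S$-integrable by Theorem \ref{S-integrable TFAE}; that theorem yields $\st(\starint_{{^*}S}{^*}f\,d\nu)=\int_{{^*}S}\st({^*}f)\,dL\nu$. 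The nonstandard characterization of continuity applied at the (nearstandard) point $x$ gives $\st({^*}f(x))=f(\st(x))$ for every $x\in {^*}S$, so $\st({^*}f)=f\circ\st$ on all of ${^*}S$; and since $\mu$ is by construction the image of $L\nu$ under the Loeb-measurable map $\st$, the change-of-variables formula gives $\int_{{^*}S}(f\circ\st)\,dL\nu=\int_S f\,d\mu$. Chaining these equalities proves \eqref{continuous integral equation}.

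The main obstacle is the measurability lemma for closed sets. This is the only place where saturation does essential work---it is needed to fit an internal set of nearly maximal measure inside the non-internal set $\st^{-1}(C)$---and it is where the point-set topology of compact Hausdorff spaces (regularity) enters. Once that lemma is in hand, the rest of (i) is routine measure theory, and (ii) is a short assembly of Theorem \ref{S-integrable TFAE}, the nonstandard description of continuity, and the pushforward change-of-variables formula.
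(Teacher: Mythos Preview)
Your proof is correct and complete. The main difference from the paper is in part \ref{Radon}: the paper simply cites Proposition 3.4.5 and Corollary 3.4.3 of Albeverio et al.\ for the Loeb measurability of $\st^{-1}(B)$ and the Radon property, whereas you supply a self-contained argument. Your measurability lemma for closed sets, proved via the identity $\st^{-1}(C)=\bigcap_{U\supseteq C}{^*}U$ (using regularity of compact Hausdorff spaces) together with saturation to trap an internal set of nearly maximal measure inside $\st^{-1}(C)$, is exactly the standard proof behind those cited results, so you are not doing anything essentially new---but you are making the appendix genuinely self-contained, which the paper does not. Note that your saturation step uses a family indexed by the open supersets of $C$, which may have large cardinality; this is fine under the paper's blanket assumption of ``sufficient saturation,'' but is worth flagging.

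For part \ref{integrals of continuous functions} the two arguments are nearly identical: both invoke boundedness of $f$, $S$-integrability via Theorem \ref{S-integrable TFAE}, and the identity $\st({^*}f)=f\circ\st$ from the nonstandard characterization of continuity. You then finish with the abstract pushforward change-of-variables formula $\int_{{^*}S}(f\circ\st)\,dL\nu=\int_S f\,d\mu$, while the paper unwinds that same formula explicitly via the layer-cake representation $\int g\,dL\nu=\int_0^\infty L\nu\{g>y\}\,dy$ and the observation that $\{f\circ\st>y\}=\st^{-1}\{f>y\}$. Your route is cleaner; the paper's route avoids appealing to the change-of-variables theorem as a black box. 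Either is fine.
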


\begin{proof}
Note that since $S$ is a compact space, we have $\st^{-1}(S) = {^*}S$. That $\mu$ is well-defined (that is, $\st^{-1}(B)$ is Loeb measurable for each $B \in \mathcal{B}(S)$) and is a Radon measure then follow from Proposition 3.4.5 and Corollary 3.4.3 in Albeverio et al. \cite[pp. 88-89]{Albeverio}. 

To see \ref{integrals of continuous functions}, let $f \colon S \rightarrow \mathbb{R}_{\geq 0}$ be a nonnegative function (which is automatically bounded, as the domain is a compact space). Since $f$ is bounded, it follows that $\st({^*}f)$ is Loeb measurable, satisfying the following (see Proposition \ref{Loeb measurability} and \ref{S2} $\Rightarrow$ \ref{S4} of Theorem \ref{S-integrable TFAE}):
\begin{align}\label{S-integrable}
      \starint_{{^*}S} {^*}f d\nu \approx \int_{{^*}S} \st({^*}f) dL\nu.
\end{align}

Also, with $\lambda$ denoting the one-dimensional Lebesgue measure, we have (since $\st({^*}f)$ is nonnegative):
\begin{align}\label{relation with Lebesgue}
    \int_{{^*}S} \st({^*}f) dL\nu &= \int_{(0, \infty)} L\nu \left\{x \in {^*}S : \st({^*}f(x)) > y \right\} d\lambda(y) \nonumber\\
    &= \int_{(0, \infty)} L\nu \left\{x \in {^*}S: f(\st(x)) > y \right\} d\lambda(y).
\end{align}

We used the nonstandard characterization of continuity (i.e., that $\st({^*}f(x)) = {^*}f(\st(x))$ for all nearstandard points $x \in {^*}S$, which in our case includes \text{all } $x\in {^*}S$ since $S$ is compact) to obtain \eqref{relation with Lebesgue} in the above.

For $y \in (0, \infty)$, let 
\begin{align*}
    A_y &\defeq \left\{x \in {^*}S: f(\st(x)) > y \right\}  \\
    \text{and } B_y &\defeq \{x \in S: f(x) > y\}.
\end{align*}

It is routine to verify that 
\begin{align}\label{A_y and B_y}
    A_y = \st^{-1}(B_y) \text{ for all } y \in (0, \infty). 
\end{align}

Thus, \eqref{relation with Lebesgue} becomes:
\begin{align}\label{integral with respect to mu}
    \int_{{^*}S} \st({^*}f) dL\nu &= \int_{(0, \infty)} L\nu(A_y) d\lambda(y) \nonumber \\
    &= \int_{(0, \infty)} L\nu(\st^{-1}(B_y)) d\lambda(y) \nonumber \\
    &= \int_{{^*}S} \st({^*}f) dL\nu \nonumber \\
    &= \int_{(0, \infty)} \mu(B_y) d\lambda(y) \nonumber \\
    &= \int_{S} f d\mu. 
\end{align}

Equations \eqref{S-integrable} and \eqref{integral with respect to mu} complete the proof. 
\end{proof}

\section*{Acknowledgments}
The author would like to thank Karl Mahlburg and Ambar Sengupta for numerous mathematical discussions. 
\bibliography{References}
\bibliographystyle{amsplain}
\end{document}